\theoremstyle{plain}
\newtheorem{Theorem}{Theorem}[section]
\newtheorem{Lemma}[Theorem]{Lemma}
\newtheorem{Proposition}[Theorem]{Proposition}
\newtheorem{Corollary}[Theorem]{Corollary}
\theoremstyle{definition}
\newtheorem{Definition}{Definition}[section]
\newtheorem{Example}{Example}[section]
\newtheorem{Question}{Question}
\newtheorem{Remark}{Remark}
\newcommand{\Tq}{\mathcal{T}_{\text{qua}}}
\newcommand{\nor}[1]{[\![ #1 ]\!]}
\DeclareMathOperator{\Span}{span}
\DeclareMathOperator{\Star}{star}
\newcommand*{\abs}[1]{\left\lvert#1\right\rvert}
\definecolor{mypink}{RGB}{255, 204, 218}
\newcommand{\revise}[1]{{#1}}
\numberwithin{equation}{section}
\title{The \revise{sharpness} 
condition for constructing a finite element from a superspline}
\author{Jun Hu}
\address{LMAM and School of Mathematical Sciences, Peking University, Beijing 100871, P.R. China.}
\email{hujun@math.pku.edu.cn}
\author{Ting Lin}
\address{School of Mathematical Sciences, Peking University, Beijing 100871, P.R. China.}
\email{lintingsms@pku.edu.cn}
\author{Qingyu Wu}
\address{School of Mathematical Sciences, Peking University, Beijing 100871, P.R. China.}
\email{wu\_qingyu@pku.edu.cn}
\author{Beihui Yuan}
\address{Beijing Institute of Mathematical Sciences and Applications, Beijing 101408, P.R. China.}
\email{beihuiyuan@bimsa.cn}
\thanks{Jun Hu was supported by NSFC project No.12288101. Ting Lin was supported by NSFC project No.123B2014. \revise{Beihui Yuan was supported by Beijing Natural Science Foundation No. 1254042.}}
\subjclass[2020]{65N30, 65D07}
\keywords{Finite element method, spline, continuity vector, pre-element mapping, extendability}
\begin{document}

\begin{abstract}
    This paper addresses \revise{sharpness} conditions for constructing $C^r$ conforming finite element spaces from a superspline spaces on general simplicial triangulations. We introduce the concept of extendability for the pre-element spaces, which encompasses both the superspline spaces and the finite element spaces. By examining the extendability condition for both types of spaces, we provide an answer to the conditions regarding the construction. A corollary of our results is that constructing $C^r$ conforming elements in $d$ dimensions generally requires an extra $C^{2^{s}r}$ continuity on $s$-codimensional simplices, and the polynomial degree is at least $(2^d r + 1)$.
\end{abstract}


\maketitle

\section{Introduction} 
\label{sec:intro}

The finite element method and the spline method are two closely related methods using piecewise polynomials to approximate the target function.
The main distinction between these two methods is their own construction philosophies: The \emph{finite element} follows a \emph{bottom-up} approach, while the \emph{spline} employs a \emph{top-down} approach. 
In fact, in the finite element method, we first propose the local shape function space and the corresponding degrees of freedom.
Then, we glue all the local spaces together by matching the degrees of freedom from adjacent simplices to obtain the global finite element space.
In contrast, in the spline method, we start with a global piecewise polynomial space, namely the spline space, on which some inter-element continuity conditions are imposed. 
Both methods have been widely used. In particular, the finite element method has been popularly used in solving partial differential equations, while the spline method has been largely used in data fitting and 3D modeling. 

For piecewise polynomial spaces with $C^0$ continuity, the finite element space is essentially equivalent to the spline space, which is spanned by the Lagrange basis. However, for general cases, these two methods result in different spaces. Both approaches will be briefly introduced below.

The $C^r$ spline space is the intersection of the piecewise polynomial space and the $C^{r}$ continuous function space. This top-down definition results in a lack of direct control on the local smoothness of $C^r$ spline functions. 
For example, the splines might have higher smoothness locally, known as (intrinsic) supersmoothness in the literature, see \cite{sorokina2010intrinsic, floater2020characterization}. 
When $r \ge 1$, the dimension of the $C^r$ spline space relies heavily on the global geometry of the triangulation. 
A typical case is the Morgan-Scott triangulation~\cite[Figure 9.2]{lai2007spline}, which exemplifies the dependence. 
Therefore, sometimes the spline space does not have a local basis~\cite{alfeld2000nonexistence}.
As a result, to determine the dimension of the spline spaces with $r\geq 1$, even for triangulations of planar regions, is a highly non-trivial task~\cite{schenck2016algebraic}. 
Nonetheless, bivariate spline spaces have been intensively studied, for which there is a well-known lower bound for the dimension of spline spaces derived by Schumaker in \cite{schumaker1979dimension}. 
It has also been proved, for example in \cite{alfeld1987dimension,alfeld1990dimension,dong1991spaces}, that under some mild  conditions and assumptions on degrees, Schumaker's formula gives the actual dimension of spline spaces, see also 
\revise{
\cite{morgan1975nodal,ibrahim1991super} for local bases. For trivariate splines, less is known, see \cite{alfeld1993generic,dipasquale2021lower} and references therein, see also \cite{alfeld1992dimension} for local bases.
}

In contrast, the study of the $C^r$ finite element spaces concentrates on how to use local degrees of freedom to achieve the continuity condition. 
In two dimensions, \cite{bramble1970triangular} generalizes the $C^1$ Argyris element \cite{argyris1968tuba} to the general $C^r$ elements, where the partial derivatives up to order $2r$ are employed in the degrees of freedom.
In three dimensions, the existing constructions of $C^r$ finite element spaces require: (1) the partial derivatives up to order $4r$ at each vertex, and (2) the partial derivatives up to order $2r$ on each edge. See \cite{ vzenivsek1970interpolation,zhang2009family} for the elements of $r = 1$, \cite{zhang2016family} for the elements of $r = 2$, and \cite{lai2007spline,shi1988Higher,wang2013multivariate} for the elements of arbitrary $r$. In arbitrary $d$ dimensions, it is commonly conjectured that the $C^r$ finite elements can be constructed if the $\mathcal{P}_{2^{d}r+1}$ polynomial space is chosen as the local shape function space, and the derivatives up to $2^{d-1-s}r$ order on $s$-dimensional subsimplex are used. Recently, three of the authors \cite{hu2021construction} gave the first construction of the conforming $C^r$ finite element in $d$ dimensions with respect to the simplicial triangulation, based on the above conditions. 

This paper sheds light on the distinction and connection between finite elements and spline spaces. 
Our results indicate that the continuity requirement of the finite element construction is tight. 
\revise{
To this end, we recall the definition of the finite element spaces and superspline spaces, which can be found in standard textbooks \cite{brenner2008mathematical,ern2021finite,MR1900298,MR2348176}.
}

\subsection*{Finite Element spaces and Superspline spaces}

Throughout this paper, following \cite{ern2021finite}, we assume that a \emph{finite element} consists of a \emph{Ciarlet's triple} $(K, P, \Sigma)$ where
\begin{itemize}
    \item[-] $K$ is a $d$-simplex, embedded in $\mathbb{R}^{d}$;
    \item[-] $P$ is a finite-dimensional space of polynomial functions;
    \item[-] $\Sigma$ is a basis for the dual space $P^{\vee}$ (the space of linear \revise{functionals} from $P$ to $\mathbb R$), together with a natural partition $\Sigma = \coprod_{F \subseteq K} \Sigma_{F}$ with respect to the subsimplices $F$ of $K$. The element in $\Sigma$ is called the \emph{degree of freedom}.
\end{itemize}

Given a triangulation $\mathcal{T}$ of the underlying region $\Omega$, we can define a \emph{finite element space} $\bm{E}(\mathcal{T})$ as a subspace of $L^{2}(\Omega)$, by specifying a finite element $(K_{j}, P, \Sigma(K_{j}))$ for each $d$-simplex $K_{j} \in \mathcal{T}$, satisfying the condition that for each subsimplex $F \subseteq K_{j}\cap K_{j'}$, $\Sigma_{F}(K_{j}) = \Sigma_{F}(K_{j'})$, which is shortened as $\Sigma_{F}$. Hence, we have a \emph{global degree of freedom} $\Sigma(\mathcal{T}) := \coprod_{F \in \mathcal{T}} \Sigma_{F}$, according to the above condition. Then the global finite element space is defined as
\begin{equation}
    \label{eq:def-FEMSp}
    \begin{aligned}
        \bm{E}(\mathcal{T}) & := \{u : \text{for each } d \text{-simplex } K \in \mathcal{T}, u|_{K}\in P(K); \\
        & \qquad \qquad \text{for each proper subsimplex } F \in \mathcal{T} \\
        & \qquad \qquad \text{and each degree of freedom } l \in \Sigma_{F}, \\
        & \qquad \qquad l(u|_{K}) \text{ is single-valued for any } K \in \Star(F; \mathcal{T})\}.
    \end{aligned}
\end{equation}
Here the $d$-simplex set $\Star(F; \mathcal{T})$ is defined as
\begin{equation*}
    \Star(F; \mathcal{T}) := \{K \in \mathcal{T}: K \text{ is a } d \text{-simplex}, F \text{ is a subsimplex of } K\}.
\end{equation*}

\revise{
Now we introduce the concepts of splines and supersplines. For many applications in the spline theory, it is often beneficial to consider splines with enhanced smoothness conditions on subsimplices. In \cite[Chapter 5.5]{lai2007spline}, such smoothness conditions primarily focus on vertices and edges, as the focus is restricted to bivariate and trivariate splines. The superspline space in any dimension is defined as follows. Let $\nabla^n f$ denote the $n$-th order gradients of a scalar function $f: \mathbb R^d \to \mathbb R$, by successively taking the gradient operation $n$ times. Note that $\nabla^n f$ is a symmetric $n$-th order tensor-valued function, and its components are given by appropriate $n$-th order partial derivatives.
}

\begin{Definition}
For a given \emph{continuity vector} $\bm{r} = (r_{1}, \cdots, r_{d})$ and a given \emph{polynomial degree} $k$, define the superspline space $\bm{S}^{\bm{r}}_{k}(\mathcal{T})$ over the triangulation $\mathcal{T}$ as
\begin{equation}
    \label{eq:def_srk}
    \begin{aligned}
        \bm{S}^{\bm{r}}_{k}(\mathcal{T}) & := \{u  : \text{for each } d \text{-simplex } K \in \mathcal{T}, u|_{K} \in \mathcal{P}_{k}(K); \\
        & \qquad \qquad \text{for each proper subsimplex } F \in \mathcal{T}_{d - s} \text{ and each } 0 \le n \le r_{s}, \\
        & \qquad \qquad \nabla^n u|_{F} \text{ is single-valued for any } K \in \Star(F; \mathcal{T})\}.
    \end{aligned}
\end{equation}
Define
\begin{equation}
    \label{eq:def_sr}
    \begin{aligned}
        \bm{S}^{\bm{r}}(\mathcal{T}) & := \{u  : \text{for each } d \text{-simplex } K \in \mathcal{T}, u|_{K} \in \mathcal{P}(K); \\
        & \qquad \qquad \text{for each proper subsimplex } F \in \mathcal{T}_{d - s} \text{ and each } 0 \le n \le r_{s}, \\
        & \qquad \qquad \nabla^n u|_{F} \text{ is single-valued for any } K \in \Star(F; \mathcal{T})\}.
    \end{aligned}
\end{equation}
be the union of all $\bm{S}_{k}^{\bm r}(\mathcal{T})$ for $k \ge 0$. 
Here, $\mathcal{P}_{k}(K)$ denotes the polynomial function space defined on $K$ with degrees less than or equal to $k$, $\mathcal{P}(K)$ denotes the polynomial function space defined on $K$, 
\revise{
and $\mathcal{T}_{d - s}$ is the collection of $(d - s)$-dimensional subsimplices of $\mathcal{T}$.
} 
Hereafter, we always assume that $r_{1} \le r_{2} \le \cdots \le r_{d}$ for convenience.
\end{Definition}

Similar to spline spaces, the dimension counting and the basis construction of superspline spaces have been explored, see \cite{alfeld1991structure,lai2007spline,toshniwal2023algebraic}.
\revise{
\begin{Remark}
We would like to emphasize that the extra smoothness condition is explicitly stated in the definition of the superspline spaces. Since a lower dimensional subsimplex is included in a higher dimensional subsimplex, it is sufficient to consider the superspline spaces with $r_{1} \le r_{2} \le \cdots \le r_{d}$. It is possible that two superspline spaces nontrivially coincide with different continuity vectors and polynomial degrees. This is known as intrinsic supersmoothness, as discussed in \cite{sorokina2010intrinsic, floater2020characterization}. However, a detailed discussion of this supersmoothness is beyond the scope of this paper.
\end{Remark}
}

There are strong relationships between finite element spaces and superspline spaces (see, e.g., \cite{schumaker1989super}). The construction of the Argyris elements then indicates that the superspline space $\bm{S}^{(1, 2)}_{k}(\mathcal{T})$ \revise{for $k \ge 5$} can be regarded as a global finite element space, or equivalently speaking, the superspline space $\bm{S}^{(1, 2)}_{k}(\mathcal{T})$ admits a finite element construction. Many classical constructions of $C^r$ finite elements falls into this category. The global space of \cite{bramble1970triangular} is $\bm{S}^{(r, 2r)}_{4r + 1}(\mathcal{T})$ in two dimensions; the global space of \cite{zhang2009family} is $\bm{S}^{(1,2,4)}_{k}(\mathcal{T})$ for $k\ge 9$ in three dimensions. 
\revise{
The above construction also exists in the language of the spline theory, see \cite{lai2007spline} for more details. Recently, in \cite{hu2021construction}, three of the authors constructed the $C^{r_1}$ conforming finite elements, by using shape function space $\mathcal{P}_{k}$ and ${r_s}$-th order derivatives in $(d-s)$-dimensional subsimplices for
\begin{equation}
    \label{eq:assumption}
    k \ge 2r_{d} + 1, \quad r_{d} \ge 2r_{d - 1} \ge \cdots \ge 2^{d - 1} r_{1}.
    \tag{A1}
\end{equation}
We shall prove in the appendix that the global finite element spaces of these finite elements \cite{hu2021construction} is $\bm{S}^{\bm{r}}_{k}(\mathcal{T}) = \bm{S}^{(r_1, \cdots, r_d)}_{k}(\mathcal{T})$. 
}
This is precisely why we incorporate superspline spaces into our study. 

The problem of constructing $C^r$ finite element spaces from superspline spaces is then formulated as the following question. 
\begin{Question}
    \label{problem:main}
    When does a superspline space $\bm S_k^{\bm r}\revise{(\mathcal{T})}$ admit a finite element construction?
\end{Question}

\revise{
By \cite{hu2021construction} and the discussion before, \eqref{eq:assumption} is a sufficient condition. The goal of this paper is to show that \eqref{eq:assumption} is also necessary. The study of smoothness conditions in arbitrary dimensions can be traced back at least to \cite{alfeld1992dimension}, which provided a sufficient condition for a space to possess a local basis. Subsequently, many of the necessary conditions regarding the independence of the determinant sets have been proposed, especially in some macro elements in two dimensions. Relevant details can be found in \cite[Chapter 7.8]{lai2007spline} and the references therein. The conditions presented in those works are closely associated with the Bernstein-Bézier representations. Nevertheless, in this paper, we approach this problem from a different perspective. Instead of relying on expansions, we aim to find some intrinsic properties of the finite element spaces and the superspline spaces. 
} 
It should be also noted that having a local basis is not a sufficient condition for admitting a finite element construction in the sense of Ciarlet's triple. Morgan and Scott \cite{morgan1975nodal} provided a nodal basis of $\bm{S}^{(1, 1)}_{k}(\mathcal{T})$ in two dimensions for $k \ge 5$. We shall discuss this case in \Cref{ex:2D}.

\subsection*{Contributions and main techniques}

This paper solves \Cref{problem:main}, by characterizing the sufficient and in some sense necessary condition for a superspline space to be a global finite element space. The key idea is to introduce the concept of extendability. To this end, we first introduce the pre-element space mappings, 
\revise{
and then give a precise definition of \Cref{problem:main}.
} 
\begin{Definition}
    A \emph{pre-element space mapping} \revise{$\mathsf P$} in $d$ dimensions is a vector space-valued mapping \revise{$\mathcal{T} \mapsto \bm P(\mathcal{T})$} defined for all the triangulations $\mathcal{T}$ of $\Omega \subset \mathbb{R}^{d}$ satisfying the following conditions:
    \begin{enumerate}
        \item Given a triangulation $\mathcal{T} := \mathcal{T}(\Omega)$, $\bm{P}(\mathcal{T})$ is 
        \revise{
        a space of functions defined in $\Omega$.
        }
        \item If $\mathcal{T}'$ is a subtriangulation (namely, a pure $d$ subcomplex) of $\mathcal{T}$, then the restriction of $\bm{P}(\mathcal{T})$ on $\mathcal{T}'$ is a subspace of $\bm{P}(\mathcal{T}')$.
    \end{enumerate}
\end{Definition}

By the definition, the finite element space \revise{mappings $\mathsf E: \mathcal{T} \mapsto \bm E(\mathcal{T})$}, the superspline space \revise{mappings $\mathsf S_{k}^{\bm r} : \mathcal{T} \mapsto \bm S_{k}^{\bm r}(\mathcal{T})$ and $\mathsf S^{\bm r} : \mathcal{T} \mapsto \bm S^{\bm r}(\mathcal{T})$} are special cases of the pre-element space mappings. 
\revise{
Note that the space $\bm P(\mathcal{T})$ can be infinite dimensional, for example, for each $\mathcal{T} = \mathcal{T}(\Omega)$, the continuous function space $C(\Omega)$, the $r$-th differentiable function space $C^r(\Omega)$, and the smooth function $C^{\infty}(\Omega)$. Both the concept itself and the given examples here are strongly related to the sheaf theory, see \cite{bredon2012sheaf}.

The next definition gives a precise definition about the term ``admitting a finite element construction" in \Cref{problem:main}.
}

\begin{Definition}
We say a pre-element space mapping \revise{$\mathsf P : \mathcal{T} \mapsto \bm P(\mathcal{T})$} admits a construction of a finite element, if there exists a Ciarlet's triple such that 
\revise{
the corresponding finite element space $\bm E(\mathcal{T})$ satisfies $\bm P(\mathcal{T}) = \bm E(\mathcal{T})$ for all triangulation $\mathcal{T}$, where $\bm E(\mathcal{T})$ is defined by \eqref{eq:def-FEMSp}.
}
\end{Definition}

We now define the extendability for the pre-element space mapping \revise{$\mathsf P$}.

\begin{Definition}
We say a pre-element space mapping \revise{$\mathsf P : \mathcal{T} \mapsto \bm{P}(\mathcal{T})$} is \emph{extendable}, if for any subtriangulation $\mathcal{T}'$ of $\mathcal{T}$, the restriction operator $\bm{P}(\mathcal{T}) \to \bm{P}(\mathcal{T}')$ is onto.
\end{Definition}

\begin{Remark}
    Note that the extendability is proposed for a mapping \revise{$\mathsf P$}, rather than a specific function space $\bm P(\mathcal{T})$.
\end{Remark}

From the bottom-up construction of the global finite element space, we can prove that all the global finite element mappings $\mathsf{E}$ are extendable, see \Cref{thm:FE-extendablity}.

As a corollary, if a superspline space admits a finite element construction, then it must be extendable. We then propose the following question, 
\revise{
which serves as a step towards answering  \Cref{problem:main}.
}

\begin{Question}
    \label{problem:extendable}
    When is a superspline space extendable? 
\end{Question}

\revise{
It follows from \cite{hu2021construction}, \Cref{coro:E=S}, and \Cref{thm:FE-extendablity} that \eqref{eq:assumption} is a sufficient condition.
} 
We will show that the answers to both \Cref{problem:main} and \Cref{problem:extendable} happen to be if and only if Assumption \eqref{eq:assumption} holds, which are formally stated as \Cref{thm:admit} and \Cref{thm:extendable-srk}, respectively.

\begin{Theorem}
    \label{thm:admit}
    Let \revise{$\mathsf S_k^{\bm r} : \mathcal{T} \mapsto \bm S_k^{\bm r}(\mathcal{T})$} be a superspline space mapping. Then, \revise{$\mathsf S_k^{\bm r}$} admits a construction of a finite element if and only if the continuity vector $\bm{r}$ and the polynomial degree $k$ satisfy Assumption \eqref{eq:assumption}.
\end{Theorem}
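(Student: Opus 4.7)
The plan is to reduce this equivalence to the two structural results already announced in the paper, namely \Cref{thm:FE-extendablity} (every global finite element mapping $\bm E$ is extendable) and \Cref{thm:extendable-srk} (a superspline mapping is extendable if and only if Assumption \eqref{eq:assumption} holds), together with the explicit construction of Hu, Lin, and Wu \cite{hu2021construction} for the sufficiency direction. I would prove the two implications separately.

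For the ``only if'' direction, suppose $\bm S_k^{\bm r}$ admits a finite element construction, so that $\bm S_k^{\bm r}(\mathcal T) = \bm E(\mathcal T)$ for some Ciarlet triple $(K, P, \Sigma)$ and every triangulation $\mathcal T$. By \Cref{thm:FE-extendablity} the mapping $\bm E$ is extendable, hence so is $\bm S_k^{\bm r}$. \Cref{thm:extendable-srk} then forces Assumption \eqref{eq:assumption}. Once the two theorems are available, this direction is essentially a one-line argument, which is precisely the payoff of introducing the extendability framework.

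For the ``if'' direction, I would invoke the construction of \cite{hu2021construction}. Under Assumption \eqref{eq:assumption} they provide a Ciarlet triple whose shape function space is $\mathcal P_k(K)$ and whose degrees of freedom on every $(d-s)$-dimensional subsimplex $F$ determine all partial derivatives of $u|_K$ up to order $r_s$ on $F$. The task is to prove that the resulting global space $\bm E(\mathcal T)$ coincides with $\bm S_k^{\bm r}(\mathcal T)$. The inclusion $\bm E \subseteq \bm S_k^{\bm r}$ is immediate, since matching these DOFs across adjacent $d$-simplices forces $\nabla^n u|_F$ to be single-valued for $0 \le n \le r_s$, which is exactly condition \eqref{eq:def_srk}. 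The reverse inclusion reduces to a local unisolvency statement: any $u \in \bm S_k^{\bm r}(\mathcal T)$ restricted to a single $d$-simplex lies in $\mathcal P_k$, its Hu-Lin-Wu DOF values are well-defined on each simplex because they depend only on derivative data available from the superspline continuity, and unisolvency of the local triple then reassembles $u$ as a global element of $\bm E$.

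The main obstacle will be in the ``if'' part, specifically the identification of the Hu-Lin-Wu global space with $\bm S_k^{\bm r}(\mathcal T)$. Beyond local unisolvency, one must verify that the derivative-type DOFs of \cite{hu2021construction} do not impose any hidden constraint that would shrink $\bm E$ strictly inside $\bm S_k^{\bm r}$; this is exactly where the inequality chain $r_d \ge 2r_{d-1} \ge \cdots \ge 2^{d-1} r_1$ is used, and explains why Assumption \eqref{eq:assumption} is precisely the right condition. Granted this identification, which I expect to be carried out in a later section using the same combinatorics that drives \Cref{thm:extendable-srk}, the theorem follows from the two structural results above.
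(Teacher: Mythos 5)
Your overall decomposition is the same as the paper's: the ``only if'' direction follows from \Cref{thm:FE-extendablity} together with \Cref{thm:extendable-srk}, and the ``if'' direction rests on identifying the Hu--Lin--Wu global space $\bm E_k^{\bm r}(\mathcal T)$ with $\bm S_k^{\bm r}(\mathcal T)$, which is the paper's \Cref{coro:E=S}. The ``only if'' half of your argument is exactly the paper's and is fine.

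In the ``if'' direction, however, you have the two inclusions backwards, and the one you call ``immediate'' is precisely where the work lies. The degrees of freedom of \Cref{defi:dof} are not point values of derivatives on $F$; they are weighted moments $\frac{1}{|F|}\int_F (D_F^\theta u)\cdot v$ with $v$ ranging over the bubble space $\mathcal B_{F,n,k}$, which is a \emph{proper} subspace of $\mathcal P_{k-n}(F)$. Matching these moments across two simplices sharing $F$ does not, by itself, force $D_F^\theta u_+|_F = D_F^\theta u_-|_F$, let alone make $\nabla^n u|_F$ single-valued. The paper's \Cref{prop:Adj-continuity} proves this by aggregating the DOFs on $F$ \emph{and on all subsimplices $E$ of $F$}, and then observing (\Cref{lem:bubble-coin} and \Cref{lemma:deri}) that these constitute exactly the unisolvent DOF set of the Hu--Lin--Wu element on $F$ for the shifted continuity vector $\bm q = (r_{s+1}-n,\dots,r_d-n)$ and degree $k-n$; Assumption \eqref{eq:assumption} is needed to guarantee that this shifted pair again satisfies the hypotheses of the unisolvency result \Cref{prop:unisolvency}. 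That is where the inequality chain actually enters the sufficiency proof, not in ruling out ``hidden constraints'' that would shrink $\bm E$. Conversely, the inclusion $\bm S_k^{\bm r}(\mathcal T)\subseteq \bm E_k^{\bm r}(\mathcal T)$, to which you assign the unisolvency argument, needs none: each DOF on $F$ is a functional of $\nabla^n u|_F$ for some $n\le r_s$, so single-valuedness of the derivatives immediately gives single-valuedness of the DOFs. The skeleton of your proof is right, but the claimed-easy step $\bm E\subseteq \bm S_k^{\bm r}$ is a genuine gap until you supply the argument of \Cref{prop:Adj-continuity}.
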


\revise{
Note that the ``if" part is essentially given by \cite{hu2021construction}, with additional technical details shown in the appendix, see \Cref{coro:E=S}. The ``only if" part is the main contribution of this paper, which is divided to the following two steps:
}
\begin{enumerate}
    \item If the mapping \revise{$\mathsf{S}^{\bm{r}}_{k}$} admits a construction of a finite element, and it must be extendable, see \revise{\Cref{thm:FE-extendablity}.}
    \item The  mapping \revise{$\mathsf{S}^{\bm{r}}_{k}$} is extendable only if Assumption \eqref{eq:assumption} holds, see \Cref{thm:extendable-srk}.
\end{enumerate}

\revise{
Moreover, we show that the concepts of superspline mappings and their extendability can be similarly established in the context of macro elements. Though only some primary results are presented in this paper, the basic framework and some interesting observations are revealed and discussed, which motivate further studies in the future, see \Cref{subsec:Macro}.
}

\subsection*{Notations}
Throughout this paper, we keep the following notations. 

Let $\mathcal{T} = \mathcal{T}(\Omega)$ be a triangulation of a domain $\Omega \subseteq \mathbb{R}^{d}$. The triangulation $\mathcal{T}'$ is a pure $d$ subcomplex of $\mathcal{T}$, called a subtriangulation of $\mathcal{T}$, denoted as $\mathcal{T}' \subseteq \mathcal{T}$. Let $\mathcal{T}_{s}$ be the collection of $s$-dimensional subsimplices of $\mathcal{T}$.

We shall use $K$ to represent a $d$-simplex and $F, E$ to represent subsimplices of $K$. The vertices of $K$ are denoted as $V_{0}, V_{1}, \cdots, V_{d}$. We assume that $\lambda_{0}, \lambda_{1}, \cdots, \lambda_{d}$ are barycentric coordinates corresponding to $V_{0}, V_{1}, \cdots, V_{d}$. Here $\lambda_{i}$ is a linear function, which can be naturally extended to $\mathbb{R}^{d}$, and $\lambda_{i}(V_{i'}) = \delta_{ii'}$ for $i, i' = 0, 1, \cdots, d$, where $\delta$ is Kronecker's delta.

For a $(d - s)$-dimensional subsimplex $F$ of $K$, formed by vertices $V_{s}, \cdots, V_{d}$, we assume that $\lambda_{F, s}, \cdots, \lambda_{F, d}$ are barycentric coordinates corresponding to $V_{s}, \cdots, V_{d}$. Here $\lambda_{F, i}$ is a linear function defined on $F$, such that $\lambda_{F, i} = \lambda_i|_{F}$ for $i = s, \cdots, d$. In particular, when $F$ is a vertex $V_{d}$, we assume that $\lambda_{F, d} = 1$.
\section{Extendability: An example}
\label{sec:extendable}

This section focuses on the extendability. We first show that the finite element space mappings are always extendable, by leveraging the bottom-up construction.

\begin{Proposition}
    \label{thm:FE-extendablity}
    The finite element space mapping \revise{$\mathsf E: \mathcal{T} \mapsto \bm E(\mathcal{T})$ is extendable, where $\bm E(\mathcal{T})$ is defined in \eqref{eq:def-FEMSp}.}
\end{Proposition}

\begin{proof}
    The proof is based on a direct construction. For any subtriangulation $\mathcal{T}' \subseteq \mathcal{T}$, it suffices to prove that the restriction operator $\bm{E}(\mathcal{T}) \to \bm{E}(\mathcal{T}')$ is onto. For any $u \in \bm{E}(\mathcal{T}')$, consider $v \in \bm{E}(\mathcal{T})$ such that
    \begin{itemize}
        \item[-] $l(v) = l(u)$ for each degree of freedom $l \in \Sigma_{F}$ defined on each subsimplex $F \in \mathcal{T}' \subseteq \mathcal{T}$. Note that the right-hand side $l(u)$ is well-defined since $l(u|_{K})$ is single-valued for any $K \in \Star(F; \mathcal{T}')$. 
        \item[-] $l(v) = 0$ for each degree of freedom $l \in \Sigma_{F}$ defined on each subsimplex $F \in \mathcal{T}$ but $F \notin \mathcal{T}'$.
    \end{itemize}
    By the above choice, it follows that $u|_{K} = v|_{K}$ holds for all $d$-simplex $K \in \mathcal{T}'$, by the definition of the finite element spaces, namely, $l(v|_{K}) = l(u|_{K})$ holds for each $l \in \Sigma_{F}$, $F \subseteq K \in \mathcal{T}'$.
\end{proof}

\revise{
By \Cref{coro:E=S}, the superspline space mapping $\mathsf{S}^{\bm{r}}_{k}$ is equal to a finite element space mapping $\mathsf{E}^{\bm{r}}_{k}$ under Assumption \eqref{eq:assumption}, which admits a construction of a finite element. Therefore, by \Cref{thm:FE-extendablity} we have the following corollaries.
}

\begin{Corollary}
    Under Assumption \eqref{eq:assumption}, the superspline space mapping \revise{$\mathsf{S}^{\bm{r}}_{k}$} is extendable.
\end{Corollary}

\begin{Corollary}
    Under Assumption
    \begin{equation}
        \label{eq:assumption2}
        r_{d} \ge 2r_{d - 1} \ge \cdots \ge 2^{d - 1} r_{1},
        \tag{A2}
    \end{equation}
    the superspline space mapping \revise{$\mathsf{S}^{\bm{r}}$} is extendable.
\end{Corollary}

\begin{proof}
    For each function in $u \in \bm{S}^{\bm{r}}(\mathcal{T}')$, there exists a $k \ge 2r_d + 1$ such that $u \in \bm{S}^{\bm{r}}_{k}(\mathcal{T}')$. We can extend $u$ as a function in $\bm{S}^{\bm{r}}_{k}(\mathcal{T}) \subseteq \bm{S}^{\bm{r}}(\mathcal{T})$ by the extendablity of \revise{$\mathsf{S}^{\bm{r}}_{k}$}, since the continuity vector $\bm{r}$ and the polynomial degree $k$ satisfy Assumption \eqref{eq:assumption}. Therefore, \revise{the superspline space mapping $\mathsf{S}^{\bm{r}}$} is extendable as well.
\end{proof}

The following theorem answers \Cref{problem:extendable}. 
\revise{
Note that the argument can be transformed into the study of the superspline space $\mathsf{S}^{\bm{r}} : \mathcal{T} \mapsto \bm S^{\bm r}(\mathcal{T})$, and we can further deduce whether the superspline space $\mathsf{S}^{\bm{r}}$ is extendable.
}

\begin{Theorem}
    \label{thm:extendable-srk}
    The superspline space mapping \revise{$\mathsf{S}^{\bm{r}}_{k}$} is extendable \revise{only if} Assumption \eqref{eq:assumption} holds. The superspline space mapping \revise{$\mathsf{S}^{\bm{r}}$} is extendable \revise{only if} Assumption \eqref{eq:assumption2} holds.
\end{Theorem}

Before the proof, we first show some one-dimensional examples to briefly explain our idea. Here, we take triangulations 
\revise{
$\mathcal{T} = \{[-1, 0],[0,1],[1,2]\}$ and $\mathcal{T}' = \{ [-1, 0],[1,2]\} \subseteq \mathcal{T}$ on the real line $\mathbb R$.
}

\begin{Example}
    We shall prove that the mapping \revise{$\mathsf {S}^{(0)}_{0}$} is not extendable. Take $u \in \bm{S}^{(0)}_{0}(\mathcal{T}')$ such that $u|_{[1, 2]} = 1$ and $u|_{[-1, 0]} = 0$. Clearly, such $u$ cannot be extended to a function in $\bm{S}^{(0)}_{0}(\mathcal{T})$, since functions in $\bm{S}^{(0)}_{0}(\mathcal{T})$ are constant in each connected component. 

    However, for the mapping \revise{$\mathsf {S}^{(0)}_{1}$}, any function in $\bm{S}^{(0)}_{1}(\mathcal{T}')$ can be extended to $\bm{S}^{(0)}_{1}(\mathcal{T})$ by considering $v \in \mathcal{P}_{1}([0, 1])$ such that $v(1) = u(1)$ and $v(0) = u(0)$.
    
    In fact, one can show that the mapping \revise{$\mathsf {S}^{(0)}_{k}$} is extendable for $k \ge 1$, with the help of \revise{univariate} Lagrange interpolation. 
\end{Example}

\begin{Example}
    We can also prove that the mapping \revise{$\mathsf {S}^{(1)}_{2}$} is not extendable. For each $\mathcal{T}$, $\bm S_{2}^{(1)}(\mathcal{T})$ is the space of $C^{1}$ continuous piecewise quadratic functions on $\mathcal{T}$, which is known as a quadratic spline space. Take $u \in \bm{S}^{(1)}_{2}(\mathcal{T}')$ such that $u|_{[1, 2]} = 0$ and $u|_{[-1, 0]} = x$. If $u$ can be extended into a function in $\bm{S}^{(1)}_{2}(\mathcal{T})$, then in $[0, 1]$ it holds that $u(1) = u'(1) = u(0) = 0$ while $u'(0) = 1$, which leads to a contradiction. 

    Similarly, one can show that the mapping \revise{$\mathsf {S}^{(1)}_{k}$} is extendable for $k \ge 3$, with the help of \revise{univariate} Hermite interpolation.
\end{Example}

In two dimensions, we show that the mapping \revise{$\mathsf S_{k}^{(1,1)}$} is not extendable. Here, consider the following three triangles defined as
\begin{equation*}
    \begin{aligned}
        K_{++} & := \big\{(x_{1}, x_{2}) \in \mathbb{R}^2: x_{1} \ge 0, x_{2} \ge 0, x_{1} + x_{2} \le 1\big\}, \\
        K_{--} & := \big\{(x_{1}, x_{2}) \in \mathbb{R}^2: x_{1} \le 0, x_{2} \le 0, - x_{1} - x_{2} \le 1\big\}, \\
        K_{+-} & := \big\{(x_{1}, x_{2}) \in \mathbb{R}^2: x_{1} \ge 0, x_{2} \le 0, x_{1} - x_{2} \le 1\big\}.
    \end{aligned}
\end{equation*}
Let $\mathcal{T} = \{K_{++}, K_{--}, K_{+-}\}$ and $\mathcal{T}' = \{K_{++}, K_{--}\} $ be a subcomplex of $ \mathcal{T}$. Furthermore, denote the edges $F_{1} = [0, 1] \times\{0\}$, $F_{2} = \{0\} \times [- 1, 0]$, and the vertex $V_{0} = (0, 0)$, which are shown in \Cref{fig:r-2D}.

\begin{figure}[htp]
    \centering
    \includegraphics{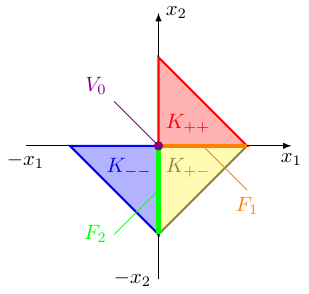}
    \caption{A mesh containing a singular vertex.}
    \label{fig:r-2D}
\end{figure}

\begin{Example}
    \label{ex:2D}
    Consider the mapping \revise{$\mathsf{S}^{(1, 1)}_{5}$} in two dimensions. \revise{For each $\mathcal{T}$, $\bm{S}^{(1, 1)}_{5}(\mathcal{T})$ is} the space of $C^{1}$ continuous piecewise $\mathcal{P}_{5}$ polynomial functions. We shall prove that the mapping \revise{$\mathsf{S}^{(1, 1)}_{5}$} is not extendable. Take \revise{a piecewise polynomial $u$} such that $u|_{K_{++}} = 0$ and $u|_{K_{--}} = x_{1}x_{2}$. Since $\partial_{x_{1}}(x_{1}x_{2})|_{V_{0}} = \partial_{x_{2}}(x_{1}x_2)|_{V_{0}} = 0$, it implies that $u \in \bm{S}^{(1, 1)}_{5}(\mathcal{T}')$. If $u$ can be extended into a function in $\bm{S}^{(1, 1)}_{5}(\mathcal{T})$, then in triangle $K_{+-}$, it holds that $\partial_{x_{2}}u|_{F_{1}} = 0$ while $\partial_{x_{1}}u|_{F_{2}} = x_{2}|_{F_{2}}$. 
    \revise{
    By comparing the coefficient of the $x_1 x_2$ term in expansion $u|_{K_{+-}} = \sum_{\sigma_{1} + \sigma_{2} \le 5} c_{\sigma} x_{1}^{\sigma_{1}} x_{2}^{\sigma_{2}}$, we immediately obtain a contradiction.
    }

    On the other hand, one can show that the mapping \revise{$\mathsf{S}^{(1, r_{2})}_{k}$} is extendable for $r_2 \ge 2$ and $k \ge 2r_{2} + 1$, with the help of Argyris element and its extension.
\end{Example}

The geometric structure around $V_{0}$ in \Cref{ex:2D} is usually considered, and the vertex $V_{0}$ is usually called a \textit{singular} vertex. 
By giving different treatments on the degrees of freedom at singular and nonsingular vertices respectively, a basis of a non-extendable superspline space can be constructed in two dimensions. 
In \cite{morgan1975nodal}, a basis is constructed for $\bm S_{k}^{(1,1)}(\mathcal{T}), k \ge 5$. 
For all vertices $V$, the degrees of freedom include the second order tangential derivatives  $\partial^2_{EE}$ along each edge $E$, sharing $V$ as an endpoint. However, for singular vertices, we need an additional degree of freedom. 

Even though a local basis of the spline function space $\bm{S}^{(1, 1)}_{5}(\mathcal{T})$ can be given from the degrees of freedom, these triangulation-dependent degrees of freedom are not considered in this paper. This implies that the existence of a local basis is not sufficient for extendability.

\section{\revise{Proof of \Cref{thm:extendable-srk}}}
\label{sec:only-if-part}

\revise{
In this section, we give a constructive proof of \Cref{thm:extendable-srk}. 
}
Some one-dimensional and two-dimensional examples have already been shown in \Cref{sec:extendable}. For general dimensions, we first prove that the condition $k \ge 2r_{d} + 1$ is necessary, by considering a subtriangulation $\mathcal{T}'$ with multiple connected components, \revise{see \Cref{prop:k-rd}.} Then we prove the necessity of the condition $r_d \ge 2r_{d - 1}$ in a specific triangulation, \revise{see \Cref{prop:continuity-rd}.} Finally, we verify the remaining part of the condition $r_{d} \ge 2r_{d - 1} \ge \cdots \ge 2^{d - 1}r_{1}$ in the same triangulation with the help of the mathematical induction argument, \revise{see \Cref{prop:continuity-rs}.}

The proof \revise{of \Cref{prop:k-rd}} is based on the following lemma, which characterizes the $C^r$ continuity in the barycentric coordinate expression. 
\revise{
We adopt the notations from \cite{de1986b} henceforth. 
} 
Given a $d$-simplex $K$ with vertices $V_{0}, V_{1}, \cdots, V_{d}$, respectively, consider the barycentric coordinate representation of a function $u \in \mathcal{P}_{k}(K)$:
\begin{equation}
    \label{eq:bb}
    u = \sum_{|\alpha| = k} c_{\alpha} \revise{\nor{\lambda}^{\alpha}},
\end{equation}
\revise{where $c_{\alpha} \in \mathbb{R}$ and $\nor{\lambda}^{\alpha} = \prod_{i = 0}^d \frac{\lambda_i^{\alpha_i}}{\alpha_i!}$ is the normalized monomial.} Here the summation goes through all multi-indices $\alpha = (\alpha_{0}, \alpha_{1}, \cdots, \alpha_{d}) \in \mathbb{N}_{0}^{d + 1}$ with the sum $|\alpha| := \sum_{i = 0}^{d} \alpha_{i} = k$, and $\lambda_{0}, \lambda_{1}, \cdots, \lambda_{d}$ are barycentric coordinates corresponding to vertices $V_{0}, V_{1}, \cdots, V_{d}$. Note that the barycentric coordinate representation is unique. 
\revise{
For a subsimplex $F$ with vertices $V_{i_0},\cdots, V_{i_s}$, denote the sums $|\alpha|_{F}$ and $|\alpha|_{\setminus F}$ by $|\alpha|_{F} = \sum_{j = 0}^{s} \alpha_{i_j}$ and $|\alpha|_{\setminus F} = |\alpha| - |\alpha|_{F}$. 
}

\revise{
The following lemma gives the vanishing condition on the coefficients, see \cite[Section 11]{de1986b}. 
\begin{Lemma}
    \label{lem:Cr-alpha}
    Fix a subsimplex $F$ of a $d$-simplex $K$, and consider the barycentric coordinate representation \eqref{eq:bb} of a function $u \in \mathcal{P}_{k}(K)$. Then, $\nabla^{n} u|_{F} = 0$ for any integer $n$ such that $0 \le n \le r$, if and only if $c_{\alpha} = 0$ for all $\alpha$ in the summation \eqref{eq:bb} such that $|\alpha|_{\setminus F} \le r$, which is equivalent to $|\alpha|_{F} \ge k - r$.
\end{Lemma}
}

We first show the necessary condition of the polynomial degree.

\begin{Proposition}
    \label{prop:k-rd}
    If the superspline space mapping \revise{$\mathsf{S}^{\bm{r}}_{k}$} is extendable, it holds that $k \ge 2r_{d} + 1$.
\end{Proposition}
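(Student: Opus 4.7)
The plan is to prove the contrapositive: assuming $k \le 2r_{d}$, I will construct a triangulation $\mathcal{T}$, a subtriangulation $\mathcal{T}' \subseteq \mathcal{T}$, and a function $u \in \bm{S}_{k}^{\bm{r}}(\mathcal{T}')$ that admits no extension to $\bm{S}_{k}^{\bm{r}}(\mathcal{T})$. The core obstruction reduces to a classical one-dimensional Hermite interpolation problem along a single edge, in the same spirit as the one-dimensional examples in \Cref{sec:extendable}.

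The configuration I would use is $\mathcal{T} = \{K_{+}, K_{0}, K_{-}\}$, a triangulation of three $d$-simplices in $\mathbb{R}^{d}$ arranged so that $K_{+} \cap K_{0} = \{V_{0}\}$ and $K_{0} \cap K_{-} = \{V_{1}\}$, where $V_{0}$ and $V_{1}$ are two fixed vertices of $K_{0}$, while $K_{+} \cap K_{-} = \emptyset$. Such a configuration is straightforwardly realized in any dimension $d \ge 1$ by placing $K_{+}$ and $K_{-}$ off to either side of $K_{0}$, attached only at the distinguished vertices. Set $\mathcal{T}' := \{K_{+}, K_{-}\}$. Since $K_{+}$ and $K_{-}$ share no common subsimplex inside $\mathcal{T}'$, the continuity conditions in \eqref{eq:def_srk} impose no coupling, so any pair $(u_{+}, u_{-}) \in \mathcal{P}_{k}(K_{+}) \times \mathcal{P}_{k}(K_{-})$ yields a valid $u \in \bm{S}_{k}^{\bm{r}}(\mathcal{T}')$. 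I pick the simplest nontrivial choice: $u_{+} \equiv 1$ and $u_{-} \equiv 0$.

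Suppose for contradiction that $u$ extends to $v \in \bm{S}_{k}^{\bm{r}}(\mathcal{T})$. Then $w := v|_{K_{0}} \in \mathcal{P}_{k}(K_{0})$ must match the $r_{d}$-jets of $u_{+}$ at $V_{0}$ and of $u_{-}$ at $V_{1}$, forcing $w(V_{0}) = 1$, $\nabla^{n} w|_{V_{0}} = 0$ for $1 \le n \le r_{d}$, and $\nabla^{n} w|_{V_{1}} = 0$ for $0 \le n \le r_{d}$. Restricting $w$ to the edge $V_{0}V_{1}$ yields the univariate polynomial $\phi(t) := w((1 - t)V_{0} + tV_{1}) \in \mathcal{P}_{k}([0, 1])$ with Hermite data $\phi(0) = 1$, $\phi^{(j)}(0) = 0$ for $1 \le j \le r_{d}$, and $\phi^{(j)}(1) = 0$ for $0 \le j \le r_{d}$. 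By classical one-dimensional Hermite interpolation, every such $\phi$ has the form $\phi = B_{0} + t^{r_{d} + 1}(1 - t)^{r_{d} + 1} R(t)$, where $B_{0} \in \mathcal{P}_{2r_{d} + 1}([0, 1])$ is the unique minimal-degree Hermite interpolant and $R$ is an arbitrary polynomial; a direct computation (writing $B_{0}(t) = (1-t)^{r_{d}+1} \sum_{j=0}^{r_{d}} \binom{r_{d}+j}{j} t^{j}$) shows that $B_{0}$ has degree exactly $2r_{d} + 1$. Consequently $\deg \phi \ge 2r_{d} + 1 > k$, contradicting $\phi \in \mathcal{P}_{k}$.

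The main obstacle is identifying the right geometric configuration: the simplices $K_{+}$ and $K_{-}$ must be disconnected inside $\mathcal{T}'$, so that their values there may be chosen independently, while both must lie in the star of a single $d$-simplex $K_{0}$ in $\mathcal{T}$, so that one polynomial is forced to reconcile both prescribed jets along a shared edge. Once this setup is found, the $d$-dimensional extension problem localizes onto that edge and becomes the familiar univariate Hermite dimension count, which cleanly produces the lower bound $k \ge 2r_{d} + 1$.
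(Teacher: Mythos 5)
Your proof is correct, and it uses the same triangulation and the same overall strategy as the paper (two simplices $K_{\pm}$ touching a middle simplex only at single vertices $V_{0}$, $V_{1}$, with $\mathcal{T}'$ the disconnected pair, so that the extension on the middle simplex is forced to carry incompatible $r_{d}$-jets at its two vertices). Where you genuinely diverge is in the witness function and the algebraic core. The paper takes $u=0$ on one outer simplex and $u=\lambda_{0}^{k_{0}}\lambda_{1}^{k_{1}}$ (with $k=k_{0}+k_{1}$, $k_{0},k_{1}\le r_{d}$) on the other, and derives the contradiction by tracking a single Bernstein--B\'ezier coefficient $c_{\beta(k_{0},k_{1})}$ of the middle polynomial via \Cref{lem:Cr-alpha}, which forces it to equal both $0$ and $1$. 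You instead take the pair $(1,0)$ and restrict the middle polynomial to the edge $V_{0}V_{1}$, reducing everything to the classical fact that a univariate polynomial with the two-point Hermite data $\phi(0)=1$, $\phi^{(j)}(0)=0$ ($1\le j\le r_{d}$), $\phi^{(j)}(1)=0$ ($0\le j\le r_{d}$) has degree at least $2r_{d}+1$; your verification that the minimal interpolant $B_{0}(t)=(1-t)^{r_{d}+1}\sum_{j=0}^{r_{d}}\binom{r_{d}+j}{j}t^{j}$ has degree exactly $2r_{d}+1$ (leading coefficient $(-1)^{r_{d}+1}\binom{2r_{d}}{r_{d}}\ne 0$) closes the argument. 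Your route is more elementary and self-contained: it needs no Bernstein--B\'ezier machinery, no choice of the splitting $k=k_{0}+k_{1}$, and would let one dispense with \Cref{lem:Cr-alpha} entirely, since that lemma is invoked only in this proof. The paper's route has the advantage of staying in the barycentric-coefficient language that the subsequent necessity arguments (\Cref{prop:continuity-rd}, \Cref{prop:continuity-rs}) also use, where the obstruction lives on higher-dimensional faces and does not localize to a single edge.
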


\begin{proof}[Proof of \Cref{prop:k-rd}]
     Suppose that $k \le 2r_{d}$ \revise{and the superspline space mapping $\mathsf{S}^{\bm{r}}_{k}$ is extendable.} We shall construct a function $u \in \bm{S}^{\bm{r}}_{k}(\mathcal{T}')$, where $\mathcal{T}' = K_{0} \cup K_{1}$ is the union of two $d$-simplices $K_{0}$ and $K_{1}$, which cannot be extended to a function in $\bm{S}^{\bm{r}}_{k}(\mathcal{T})$ with $\mathcal{T} = K_{0} \cup K_{1} \cup K$. Here $K$ is a $d$-simplex distinct from $K_{0}$ and $K_{1}$, with the vertices $V_{0}, V_{1}, \cdots, V_{d}$. Consider $d$-simplices $K_{0}$, $K_{1}$ and $K$, such that
    \begin{equation*}
        K_{0} \cap K_{1} = \emptyset, \quad K_{0} \cap K = V_{0}, \quad K_{1} \cap K = V_{1}.
    \end{equation*}
    \revise{See \Cref{fig:k-rd} for an illustration in two dimensions.}
    \begin{figure}[htp]
        \centering
        \includegraphics{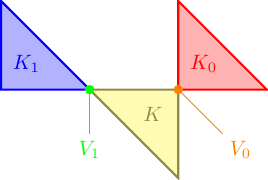}
        \caption{\revise{A two-dimensional example supporting \Cref{prop:k-rd}}.}
        \label{fig:k-rd}
    \end{figure}
    
    Since $k \le 2r_{d}$, there exist two integers $k_{0}$ and $k_{1}$ such that $k_{0} \le r_{d}$, $k_{1} \le r_{d}$, and $k = k_{0} + k_{1}$. \revise{Consider the multi-index $\beta = (k_{0}, k_{1}, 0, \cdots, 0) \in \mathbb{N}_0^{d + 1}$, where the last $(d - 1)$ indices are zero.} Define $u \in L^{2}(\mathcal{T}')$ by
    \begin{equation*}
        u|_{K_{0}} := u_{0} = 0, \quad u|_{K_{1}} := u_{1} = \revise{\nor{\lambda}^{\beta} = \frac{\lambda_{0}^{k_{0}} \lambda_{1}^{k_{1}}}{k_{0}! k_{1}!}}.
    \end{equation*}
    Here, $\lambda_{0}, \lambda_{1}, \cdots, \lambda_{d}$ are barycentric coordinates of $K$ corresponding to vertices $V_{0}, V_{1}, \cdots, V_{d}$, respectively, which are extended to linear functions on $\mathbb{R}^{d}$. Since $K_{0} \cap K_{1} = \emptyset$, it holds that the function $u \in \bm{S}^{\bm{r}}_{k}(\mathcal{T}')$.

    \revise{
    By assumption, the superspline space mapping $\mathsf{S}^{\bm{r}}_{k}$ is extendable. Therefore, 
    } 
    $u$ can be extended to a function in $\bm{S}^{\bm{r}}_{k}(\mathcal{T})$, still denoted by $u$ for convenience. Consider the \revise{restriction of $u$} on the simplex $K$, expressed uniquely as
    \begin{equation*}
        u|_{K} := u = \sum_{|\alpha| = k} c_{\alpha} \revise{\nor{\lambda}^{\alpha}}.
    \end{equation*}
    \revise{
    Now we compute the coefficients to obtain the contradiction.} Consider the vertex $V_{0}$, which is the common vertex of $K_{0}$ and $K$. By the continuity assumption, for any integer $n$ such that $0 \le n \le r_{d}$, it holds that
    \begin{equation}
        \label{eq:nablau2}
        \nabla^n u \big|_{V_{0}} = \nabla^n u_{0} \big|_{V_{0}} = 0.
    \end{equation}
    \revise{
    Observe that $|\beta|_{\setminus V_{0}} = k_{1} \le r_{d}$, then by \Cref{lem:Cr-alpha}, it holds that $c_{\beta} = 0$.
    }
    
    On the other hand, consider the vertex $V_{1}$, which is the common vertex of both $K_{1}$ and $K$. By the continuity assumption, for any integer $n$ such that $0 \le n \le r_{d}$, it holds that
    \begin{equation}
        \label{eq:nablau2-1}
        \nabla^n \big(u - \revise{\nor{\lambda}^{\beta}}\big) \big|_{V_{1}} = \nabla^n \big(u_{1} - \revise{\nor{\lambda}^{\beta}}\big) \big|_{V_{1}} = 0.
    \end{equation}
    Next, we derive the contradiction from \eqref{eq:nablau2} and \eqref{eq:nablau2-1}. Indeed, the polynomial $\big(u - \revise{\nor{\lambda}^{\beta}}\big)$ can be uniquely expressed as
    \begin{equation*}
        u - \revise{\nor{\lambda}^{\beta}} = \sum_{|\alpha| = k} c'_{\alpha} \revise{\nor{\lambda}^{\alpha}},
    \end{equation*}
    with 
    \revise{
    $c'_{\beta} = c_{\beta} - 1$. Note that $|\beta|_{\setminus V_{1}} = k_{0} \le r_{d}$, then by \Cref{lem:Cr-alpha}, it holds that $c'_{\beta} = 0$, which contradicts to $c_{\beta} = 0$. 
    } 
    As a consequence, $k \ge 2r_{d} + 1$.
\end{proof}

\revise{
\begin{Remark}
    \label{rmk:restriction}
    The contradiction in this proof arises from the function values on the line $V_{0} V_{1}$. Therefore, the results can also be derived by restricting all the function values to the one-dimensional line $V_{0} V_{1}$, and consider the one-dimensional representation of $v$, denoted as
    \begin{equation*}
        v = \sum_{\alpha_{0} + \alpha_{1} = k} c_{(\alpha_{0}, \alpha_{1})} \nor{\lambda}^{(\alpha_{0}, \alpha_{1})}.
    \end{equation*}
    The remaining proof resembles the original one, and the contradiction is derived from the coefficient $c_{(k_{0},k_{1})}$.
\end{Remark}
}

In the remaining part of this section, we fix $\mathcal{T}$ to be the triangulation $\Tq$ of the $L^{1}$ unit ball $B_{d} = \{(x_{1}, \cdots, x_{d}) \in \mathbb R^{d}: \sum_{i = 1}^{d} |x_{i}| \le 1\}$ in $\mathbb{R}^{d}$, while the triangulation $\Tq(B_{d}) = \{K_{0}, K_{1}, \cdots, K_{2^{d} - 1}\}$. The simplices $K_{j}$, $j = 0, 1, \cdots, 2^{d} - 1$ are defined as follows. For $j = 0, 1, \cdots, 2^{d} - 1$, we have the binary representation $j = \sum_{i = 1}^{d} \epsilon_{i} \cdot 2^{i - 1}, \epsilon_{i} \in \{0, 1\}$. The simplex $K_{j}$ is then defined as $K_{j} =\{(x_{1}, \cdots, x_{d}) \in B_{d}: (-1)^{\epsilon_{i}}x_{i} \ge 0 \text{ for } i = 1, \cdots, d\}$. For example, $K_{0}$ collects all the points in $B_{d}$ with non-negative coordinates, while $K_{2^{d} - 1}$ collects all the points in $B_{d}$ with non-positive coordinates.

Now let $V_{0}$ be the vertex with the coordinate $(0, \cdots, 0) \in \mathbb{R}^{d}$. For $i = 1, \cdots, d$, the vertex $V_{i}^{+}$ denotes the point with coordinate $(x_{1}, \cdots, x_{d}) \in \mathbb{R}^{d}$ such that $x_{i'} = \delta_{ii'}$, and the vertex $V_{i}^{-}$ denotes the point with coordinate $(x_{1}, \cdots, x_{d}) \in \mathbb{R}^{d}$ such that $x_{i'} = - \delta_{ii'}$, where $\delta$ is Kronecker's delta.

Therefore, the simplex $K_{j}$ has $(d + 1)$ vertices $V_{0}$, $V_{i}^{+}$ for those $\epsilon_{i} = 0$, and $V_{i}^{-}$ for those $\epsilon_{i} = 1$, for $j = \sum_{i = 1}^{d} \epsilon_{i} \cdot 2^{i - 1}, \epsilon_{i} \in \{0, 1\}$.

\revise{
Notice that the following representation is unique for a polynomial $u \in \mathcal{P}$:
\begin{equation}
    \label{eq:poly}
    u = \sum_{\sigma \in \mathbb{N}_{0}^{d}} c_{\sigma} \nor{x}^{\sigma},
\end{equation}
where $c_{\sigma} \in \mathbb{R}$, $\nor{x}^{\sigma} = \prod_{i = 1}^{d} \frac{x_{i}^{\sigma_{i}}}{\sigma_{i}!}$ is the normalized monomial, and the summation goes through all multi-indices $\sigma = (\sigma_{1}, \cdots, \sigma_{d}) \in \mathbb{N}_{0}^{d}$. For $u \in \mathcal P_k$, we also have 
\begin{equation*}
    u = \sum_{|\sigma| \le k} c_{\sigma} \nor{x}^{\sigma},
\end{equation*}
with the sum $|\sigma|$ defined as $|\sigma| := \sum_{i = 1}^{d} \sigma_{i}$.
}

We first show the following result. 
\begin{Proposition}
    \label{prop:continuity-rd}
    If the superspline space mapping \revise{$\mathsf{S}^{\bm{r}}$} is extendable, then it holds that
    \begin{equation}
        \label{eq:condition-rd}
        r_{d} \ge 2r_{d - 1}.
    \end{equation}
    In addition, the superspline space mapping \revise{$\mathsf{S}^{\bm{r}}_{k}$} is extendable only if \eqref{eq:condition-rd} holds.
\end{Proposition}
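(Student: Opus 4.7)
The plan is to prove the contrapositive: assuming $r_{d} < 2 r_{d-1}$, I exhibit a subtriangulation $\mathcal{T}' \subseteq \Tq$ and a function in $\bm{S}^{\bm r}(\mathcal{T}')$ that admits no extension to $\bm{S}^{\bm r}(\Tq)$. The counterexample uses a polynomial of degree $2 r_{d-1}$, which is automatically admissible in the $\bm{S}^{\bm r}_k$ setting because extendability of $\bm{S}^{\bm r}_k$ already forces $k \ge 2r_d + 1 > 2 r_{d-1}$ by \Cref{prop:k-rd}; the case $d=2$ recovers \Cref{ex:2D}.

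Take $\mathcal{T}' := \{K_0, K_{2^d - 1}\}$, the two ``opposite'' simplices of $\Tq$, whose vertex sets $\{V_0, V_1^+, \ldots, V_d^+\}$ and $\{V_0, V_1^-, \ldots, V_d^-\}$ intersect only in $V_0$. Set $u|_{K_0} := 0$ and $u|_{K_{2^d - 1}} := x_1^{r_{d-1}} x_2^{r_{d-1}}$. The only continuity condition imposed by $\mathcal{T}'$ is vertex continuity at $V_0$ of order $r_d$, and since $x_1^{r_{d-1}} x_2^{r_{d-1}}$ vanishes at $V_0$ to order $2 r_{d-1} \ge r_d + 1$ by the hypothesis, the function $u$ belongs to $\bm{S}^{\bm r}(\mathcal{T}')$.

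Suppose for contradiction that some $\tilde u \in \bm{S}^{\bm r}(\Tq)$ restricts to $u$. Introduce the connecting simplex $K_c$ of $\Tq$ with sign pattern $(+, -, +, \ldots, +)$; its vertex set contains $V_1^+$ and $V_2^-$, so the two edges $E_1 := \mathrm{conv}(V_0, V_1^+) \subseteq K_0 \cap K_c$ and $E_2 := \mathrm{conv}(V_0, V_2^-) \subseteq K_{2^d - 1} \cap K_c$ are $1$-faces of $K_c$ meeting only at $V_0$. Edge continuity (the $s = d-1$ instance of the superspline definition) on $E_1$ forces $\partial^\alpha \tilde u|_{E_1}$ to be single-valued for every $|\alpha| \le r_{d-1}$, and since $\tilde u|_{K_0} = 0$ this restriction vanishes identically on $E_1$; differentiating tangentially along the $x_1$-axis and evaluating at $V_0$ yields $\partial^\beta \tilde u|_{K_c}(V_0) = 0$ for every $\beta$ with $\beta_2 + \beta_3 + \cdots + \beta_d \le r_{d-1}$. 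The symmetric argument on $E_2$ gives $\partial^\beta \tilde u|_{K_c}(V_0) = \partial^\beta (x_1^{r_{d-1}} x_2^{r_{d-1}})(V_0)$ whenever $\beta_1 + \beta_3 + \cdots + \beta_d \le r_{d-1}$.

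The diagonal multi-index $\beta := (r_{d-1}, r_{d-1}, 0, \ldots, 0)$ satisfies both inequalities simultaneously, so combining the two identities forces $(r_{d-1}!)^2 = \partial_{x_1}^{r_{d-1}} \partial_{x_2}^{r_{d-1}}(x_1^{r_{d-1}} x_2^{r_{d-1}})(V_0) = 0$, the desired contradiction. The main obstacle is conceptual rather than computational: one must choose the two simplices, the connecting simplex, the bivariate monomial, and the diagonal multi-index so that two edge-continuity identities overlap at a single mixed partial of total order $2 r_{d-1}$---precisely the order at which the hypothesis $r_d < 2 r_{d-1}$ creates a gap between what vertex continuity on $\mathcal{T}'$ permits and what edge continuity on $\Tq$ demands.
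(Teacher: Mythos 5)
Your proof is correct and follows essentially the same strategy as the paper's: two opposite simplices of $\Tq$ meeting only at $V_{0}$, a monomial on one of them vanishing to order $2r_{d-1} > r_{d}$ at the vertex, and a connecting simplex on which two edge-continuity ($C^{r_{d-1}}$) conditions are incompatible. The only differences are cosmetic — you use the symmetric monomial $x_{1}^{r_{d-1}}x_{2}^{r_{d-1}}$ instead of $x_{d-1}^{r_{d-1}}x_{d}^{r_{d}-r_{d-1}+1}$ and extract the contradiction from a single mixed partial at $V_{0}$ rather than by comparing coefficients in the monomial expansion along the two edges.
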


\begin{proof}
    We use a contradiction argument. Suppose that $r_{d} \le 2r_{d - 1} - 1$ and the superspline space mapping \revise{$\mathsf{S}^{\bm{r}}$} is extendable. Our goal is to prove that there exists a function $u$, defined in $\bm{S}^{\bm{r}}(K_{++} \cup K_{--})$ for two $d$-simplices $K_{++} = K_{0}$ and $K_{--} = K_{2^{d} - 1}$, but cannot be extended to $\bm{S}^{\bm{r}}(\Tq)$. Recall again that we always assume that $0 \le r_{1} \le r_{2} \le \cdots \le r_{d - 1} \le r_{d}$ and $r_{0} = 0$. Let $\overline{r}_{d - 1} = r_{d} - r_{d - 1} +1$, and $u \in L^{2}(K_{++} \cup K_{--})$ be defined by
    \begin{equation*}
        \begin{aligned}
            u|_{K_{++}} & := u_{++} = 0, \\
            u|_{K_{--}} & := u_{--} = \revise{\nor{x}^{\omega_{d}} = \frac{x_{d - 1}^{r_{d - 1}} x_{d}^{\overline{r}_{d - 1}}}{r_{d - 1}! \overline{r}_{d - 1}!}},
        \end{aligned}
    \end{equation*}
    \revise{
    where the multi-index $\omega_{d} \in \mathbb{N}_{0}^{d}$ is defined as
    }
    \begin{equation*}
        \revise{
        \omega_{d} = (\underbrace{0, \cdots, 0}_{(d - 2) \text{ zeros}}, r_{d - 1}, \overline{r}_{d - 1}).
        }
    \end{equation*}
    We first assert $u \in \bm{S}^{\bm{r}}(K_{++} \cup K_{--})$. Note that the only intersection of $K_{++}$ and $K_{--}$ is $V_{0}$, it suffices to check that
    \begin{equation*}
        \nabla^{n} u_{--} \big|_{V_{0}} = 0 = \nabla^{n} u_{++} \big|_{V_{0}}
    \end{equation*}
    for all integers $n$ such that $0 \le n \le r_{d}$, which is straightforward since $\overline{r}_{d - 1} + r_{d - 1} = r_{d} + 1 > r_{d}$.

    \begin{figure}[h]
        \centering
        \includegraphics{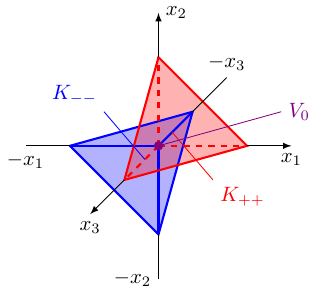}
        \qquad
        \includegraphics{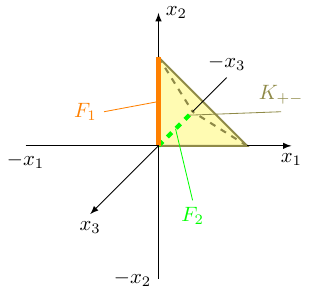}
        \caption{\revise{Three-dimensional illustrations of $\mathcal{T}'$ and $K_{+-}$.}}
    \end{figure}

    \revise{
    Since $\revise{\mathsf{S}^{\bm{r}}}$ is extendable, 
    } 
    the function $u$ can be extended to a function in $\bm{S}^{\bm{r}}(\Tq)$, still denoted by $u$ for convenience. We focus on the value of $u$ in $K_{+-} = K_{2^{d - 1}}$, namely, $u_{+-} := u|_{K_{+-}}$, and express it as
    \begin{equation*}
        u_{+-} = \revise{\sum_{\sigma \in \mathbb{N}_{0}^{d}} c_{\sigma} \nor{x}^{\sigma}}, 
    \end{equation*}

    where $c_{\sigma} \in \mathbb{R}$ and $\sigma = (\sigma_{1}, \cdots, \sigma_{d}) \in \mathbb{N}_{0}^{d}$ goes through all multi-indices. Consider the multi-index
    \begin{equation*}
        \revise{\tau_{d}(p, q)} = (\underbrace{0, \cdots, 0}_{(d - 2) \text{ zeros}}, p, q),
    \end{equation*}
    where the first $(d - 2)$ indices are zero, then it follows that
    \begin{equation*}
        \revise{\tau_{d}(r_{d - 1}, \overline{r}_{d - 1}) = \omega_{d}.}
    \end{equation*}
    
    Let $F_{1} := \{(0, \cdots, 0, x, 0): 0 \le x \le 1 \}$ be the edge on the positive $x_{d - 1}$ axis, which is a common edge of $K_{++}$ and $K_{+-}$. By the assumption, we have $\overline{r}_{d - 1} = r_{d} - r_{d - 1} + 1 \le r_{d - 1}$. It follows from the definition of $\bm{S}^{\bm{r}}(\Tq)$ that 
    \begin{equation*}
        \frac{\partial^{\overline{r}_{d - 1}}}{\partial x_{d}^{\overline{r}_{d - 1}}} u_{+-} \bigg|_{F_{1}} = \frac{\partial^{\overline{r}_{d - 1}}}{\partial x_{d}^{\overline{r}_{d - 1}}} u_{++} \bigg|_{F_{1}} = 0.
    \end{equation*}
    A direct calculation yields that
    \begin{equation*}
        \frac{\partial^{\overline{r}_{d - 1}}}{\partial x_{d}^{\overline{r}_{d - 1}}} u_{+-} \bigg|_{F_{1}} = \revise{\sum_{p \in \mathbb{N}_{0}} c_{\tau_{d}(p, \overline{r}_{d - 1})} \nor{x}^{\tau_{d}(p, 0)}} \big|_{F_{1}}.
    \end{equation*}
    \revise{
    Therefore, for $\omega_{d} = \tau_{d}(r_{d - 1}, \overline{r}_{d - 1})$, it implies that $c_{\omega_{d}} = 0$.
    }
    
    Similarly, consider the edge on the negative $x_{d}$ axis $F_{2} := \{(0, \cdots, 0, 0, x) : - 1 \le x \le 0\}$, which is a common edge of $K_{+-}$ and $K_{--}$. Again, it holds that
    \begin{equation*}
        \frac{\partial^{r_{d - 1}}}{\partial x_{d - 1}^{r_{d - 1}}} u_{+-} \bigg|_{F_{2}} = \frac{\partial^{r_{d - 1}}}{\partial x_{d - 1}^{r_{d - 1}}} u_{--} \bigg|_{F_{2}} = \revise{\nor{x}^{\tau_{d}(0, \overline{r}_{d - 1})}} \big|_{F_{2}}.
    \end{equation*}
    Another direct calculation leads to
    \begin{equation*}
        \frac{\partial^{r_{d - 1}}}{\partial x_{d - 1}^{r_{d - 1}}} u_{+-} \bigg|_{F_{2}} = \revise{\sum_{q \in \mathbb{N}_{0}} c_{\tau_{d}(r_{d - 1}, q)} \nor{x}^{\tau_{d}(0, q)}} \big|_{F_{2}}.
    \end{equation*}
    \revise{
    Hence, for $\omega_{d} = \tau_{d}(r_{d - 1}, \overline{r}_{d - 1})$, it follows that $c_{\omega_{d}} = 1$, which contradicts to $c_{\omega_{d}} = 0$. As a consequence, $r_{d} \ge 2r_{d - 1}$.
    }

    For the extendability of the superspline space mapping \revise{$\mathsf{S}^{\bm{r}}_{k}$}, a similar contradiction argument is used. Suppose $r_{d} \le 2r_{d - 1} - 1$ and the mapping \revise{$\mathsf{S}^{\bm{r}}_{k}$} is extendable. From \Cref{prop:k-rd}, it holds that $k \ge 2r_{d} + 1$. Note that $r_{d - 1} + \overline{r}_{d - 1} = r_{d} + 1 \le 2r_{d} + 1$, which implies the constructed $u \in \bm{S}^{\bm{r}}_{k}(K_{++} \cup K_{--})$. Note that $u$ cannot be extended to $\bm{S}^{\bm{r}}(\Tq)$ and $\bm{S}^{\bm{r}}_{k}(\Tq) \subseteq \bm{S}^{\bm{r}}(\Tq)$, which leads to a contradiction. Therefore, it can be asserted that the superspline space mapping \revise{$\mathsf{S}^{\bm{r}}_{k}$} is extendable only if $r_{d} \ge 2r_{d - 1}$ holds.
\end{proof}

We now prove the general result.

\begin{Proposition}
    \label{prop:continuity-rs}
    If the superspline space mapping \revise{$\mathsf{S}^{\bm{r}}$} is extendable, then it holds that
    \begin{equation}
        \label{eq:condition-rs}
        r_{d} \ge 2r_{d - 1} \ge \cdots \ge 2^{d - 1}r_{1}. \tag{A2}
    \end{equation}
    In addition, the superspline space mapping \revise{$\mathsf{S}_{k}^{\bm{r}}$} is extendable only if \eqref{eq:condition-rs} holds.
\end{Proposition}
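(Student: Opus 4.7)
The plan is to establish the chain $r_{s+1} \ge 2 r_s$ for each $s \in \{1, \ldots, d-1\}$ by downward induction on $s$; the base case $s = d-1$ is precisely \Cref{prop:continuity-rd}, so I assume $s \le d-2$ and that $r_{s'+1} \ge 2 r_{s'}$ has been shown for all $s' > s$, which in particular gives $r_d \ge 2^{d-s-1} r_{s+1}$. Suppose for contradiction that $r_{s+1} \le 2r_s - 1$ and set $\bar r_s := r_{s+1} - r_s + 1 \le r_s$.

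The geometry generalizes the one in \Cref{prop:continuity-rd} by replacing a $0$-dimensional ``seam'' with a $(d-s-1)$-dimensional one. In $\Tq$, take $K_{++} := K_0$; let $K_{--}$ be the simplex with vertices $\{V_0, V_1^+, \ldots, V_{d-s-1}^+, V_{d-s}^-, \ldots, V_d^-\}$; and let $K_{+-} := K_{2^{d-1}}$. Then $K_{++} \cap K_{--}$ is exactly the $(d-s-1)$-dimensional subsimplex $G := [V_0, V_1^+, \ldots, V_{d-s-1}^+]$, and $K_{+-}$ shares the $(d-s)$-dimensional subsimplex $H_1 := [V_0, V_1^+, \ldots, V_{d-s}^+]$ with $K_{++}$ and $H_2 := [V_0, V_1^+, \ldots, V_{d-s-1}^+, V_d^-]$ with $K_{--}$ (both carrying continuity index $r_s$). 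Letting $\mu_0, \ldots, \mu_d$ denote the barycentric coordinates on $K_{--}$ adapted to this vertex ordering, I would define
\begin{equation*}
u|_{K_{++}} := 0, \qquad u|_{K_{--}} := \mu_0^B \mu_1^B \cdots \mu_{d-s-1}^B \cdot \mu_{d-s}^{r_s}\mu_d^{\bar r_s},
\end{equation*}
where $B$ is the smallest positive integer satisfying $(d-s-|I|) B \ge r_{d-|I|+1} - r_{s+1}$ for every $I \subseteq \{0, \ldots, d-s-1\}$ with $1 \le |I| \le d-s-1$.

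Every subsimplex of $G$ shared by $K_{++}$ and $K_{--}$ has the form $[W_i : i \in I]$ for such an $I$, and the order of vanishing of $u|_{K_{--}}$ in the normal direction to it equals $(d-s-|I|)B + r_{s+1}+1$, which is at least $r_{d-|I|+1}+1$ by the choice of $B$. The crux is that the single barycentric monomial $\prod \mu_i^{a_i}$ treats uniformly both subsimplices containing $V_0$ and those that do not, such as $[V_{j_1}^+, V_{j_2}^+]$, on which a naive Cartesian tensor-product construction fails to achieve enough vanishing. For the $\bm{S}_k^{\bm r}$ version, the inductive hypothesis $r_j \le r_d/2^{d-j}$ bounds $B$ from above by roughly $r_d/(d-s-1)$, which keeps the total degree $(d-s)B + r_{s+1} + 1$ of $u|_{K_{--}}$ within $2 r_d + 1 \le k$, so the same witness lies in $\bm{S}_k^{\bm r}(K_{++} \cup K_{--})$.

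To derive the contradiction, I would assume $u$ extends to some $\tilde u \in \bm{S}^{\bm r}(\Tq)$ and write $\tilde u|_{K_{+-}} = \sum_\sigma c_\sigma x^\sigma$. The $r_s$-continuity on $H_1$ together with $u|_{K_{++}} = 0$, applied via $\partial_{x_d}^{\bar r_s}$ (which is legal since $\bar r_s \le r_s$), forces $c_\sigma = 0$ whenever $\sigma_d = \bar r_s$ and $\sigma_j = 0$ for $d-s+1 \le j \le d-1$. The $r_s$-continuity on $H_2$ applied via $\partial_{x_{d-s}}^{r_s}$, after a short Leibniz expansion in which only the $k=0$ term survives because $x_{d-s}$ vanishes on $H_2$, simplifies $\partial_{x_{d-s}}^{r_s}(u|_{K_{--}})|_{H_2}$ to $(-1)^{r_s+\bar r_s}\, r_s!\,(\mu_0|_{H_2})^B \prod_{i=1}^{d-s-1} x_i^B \cdot x_d^{\bar r_s}$; reading off the coefficient of $x_1^B \cdots x_{d-s-1}^B x_d^{\bar r_s}$ (which picks up the constant term in the expansion of $(\mu_0|_{H_2})^B = (1 - \sum x_i + x_d)^B$) yields $c_{\sigma^\ast} = (-1)^{r_s+\bar r_s} \ne 0$ for $\sigma^\ast := B(e_1 + \cdots + e_{d-s-1}) + r_s e_{d-s} + \bar r_s e_d$. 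Since $\sigma^\ast$ also satisfies the $H_1$ condition, one already has $c_{\sigma^\ast} = 0$, the desired contradiction. The hardest part is the uniform combinatorial verification that a single barycentric monomial achieves sufficient vanishing on \emph{every} shared face of $G$ (especially the non-$V_0$-containing ones) together with the degree bound $(d-s)B + r_{s+1}+1 \le 2r_d+1$ from the inductive hypothesis that underlies the $\bm{S}_k^{\bm r}$ case.
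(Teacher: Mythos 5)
Your proposal is correct and follows the same overall strategy as the paper's proof: the same triangulation $\Tq$, the same three-simplex configuration (two simplices meeting only along a codimension-$(s+1)$ seam, plus a third simplex sharing a codimension-$s$ face with each), the same downward induction with \Cref{prop:continuity-rd} as base case, and the same endgame of extracting a single monomial coefficient from $u|_{K_{+-}}$ in two incompatible ways via $\partial_{x_d}^{\bar r_s}$ on $H_1$ and $\partial_{x_{d-s}}^{r_s}$ on $H_2$. The genuine difference is the witness on $K_{--}$. The paper multiplies $x_{s-1}^{r_{s-1}}x_s^{\bar r_{s-1}}$ by the telescoping factors $\prod_i x_i^{r_i - r_{i-1}}$ together with $\bigl(1-\sum_i x_i\bigr)^{r_d-r_s}$, so that the vanishing order on a shared face of codimension $t$ is exactly $r_t+1$ (here the inductive hypothesis enters through the monotonicity of the increments $r_i-r_{i-1}$), and the total degree is exactly $2r_d-r_s+1\le 2r_d+1$ with no further work. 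You instead put a uniform exponent $B$ on all the seam barycentric coordinates $\mu_0,\dots,\mu_{d-s-1}$, chosen as a maximum over faces; this makes the vanishing verification uniform (and cleanly handles the faces not containing $V_0$ via $\mu_0$, exactly the role the paper's $(1-\sum x_i)^{r_d-r_s}$ factor plays), but it shifts the burden onto the degree bound $(d-s)B+r_{s+1}+1\le 2r_d+1$ needed for the $\bm S_k^{\bm r}$ statement. That bound does hold, but it is the one step you leave at the level of ``roughly'': one must check $(d-s)\lceil (r_{d-m+1}-r_{s+1})/(d-s-m)\rceil\le 2r_d-r_{s+1}$ for every $m=1,\dots,d-s-1$ using $r_{d-m+1}\le r_d/2^{m-1}$ (and dispose of the degenerate case $r_{s+1}=0$, where there is nothing to prove). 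So: same route, an interchangeable witness, with your version trading the paper's exact telescoping degree count for a small but nontrivial ceiling-function verification that should be written out before the argument is complete.
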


\begin{proof}
    This proof is based on a mathematical induction. The first inequality of \eqref{eq:condition-rs} for $r_{d}$ and $r_{d - 1}$ is already proved in \Cref{prop:continuity-rd}. Suppose for $t = d, d - 1, \dots, s + 1$ with $s \ge 2$, it holds that $r_{t} \ge 2r_{t - 1}$, but $r_{s} < 2r_{s - 1}$ and the superspline space mapping \revise{$\mathsf{S}^{\bm{r}}$} is extendable.
    
    Let the two $d$-simplices $K_{++} = K_{0}$, $K_{--} = K_{2^s - 1}$, and let $\overline{r}_{s - 1} := r_{s} - r_{s - 1} + 1$. Define $u \in L^{2}(K_{++} \cup K_{--})$ by 
    \begin{equation*}
        \begin{aligned}
            u|_{K_{++}} := u_{++} & = 0, \\
            u|_{K_{--}} := u_{--} & \revise{= \nor{x}^{\omega_{s}}} \bigg(1 - \sum_{i = s + 1}^{d} x_{i}\bigg)^{r_{d} - r_{s}} \\
            & = \revise{\bigg(\frac{x_{s - 1}^{r_{s - 1}} x_{s}^{\overline{r}_{s - 1}}}{r_{s - 1}! \overline{r}_{s - 1}!} \prod_{i = s + 1}^{d} \frac{x_{i}^{r_{i} - r_{i - 1}}}{(r_{i} - r_{i - 1})!}\bigg)} \bigg(1 - \sum_{i = s + 1}^{d} x_{i}\bigg)^{r_{d} - r_{s}},
        \end{aligned}
    \end{equation*}
    \revise{
    where the multi-index $\omega_{s} \in \mathbb{N}_{0}^{d}$ is defined as
    }
    \begin{equation*}
        \revise{\omega_{s} = (\underbrace{0, \cdots, 0}_{(s - 2) \text{ zeros}}, r_{s - 1}, \overline{r}_{s - 1}, r_{s + 1} - r_{s}, \cdots, r_{d} - r_{d - 1}).}
    \end{equation*}
    
    We first show that $u \in \bm{S}^{\bm{r}}(K_{++} \cup K_{--})$. Note that all the common subsimplices of the two simplices $K_{++}$ and $K_{--}$ lie in 
    \begin{equation*}
        F_{12} := \big\{ (\underbrace{0,\cdots, 0}_{\text{$s$ zeros }} , x_{s + 1}, \cdots, x_{d}) \in B_{d} : x_{s + 1}, \cdots, x_{d} \ge 0\big\}.
    \end{equation*}
    This $F_{12}$ is the convex hull of $V_{s + 1}^{+}, \cdots, V_{d}^{+}$ and $V_{0}$. It follows from $r_{s} \le 2r_{s - 1} - 1$ that $\overline{r}_{s - 1} \le r_{s - 1}$. 
     
    For a $(d - t)$-dimensional subsimplex $E$ of $F_{12}$, we separate the discussion by whether $E$ contains $V_0$. 
    \begin{itemize}
        \item[-] The vertices of $E$ are $V_{i_{t+1}}^{+}, \cdots, V_{i_{d}}^{+}$ and $V_{0}$. Note here $x_{s - 1}$, $x_{s}$ and $x_{i'}$ for $V_{i'}^{+} \not\in E$ vanish on $E$. Due to $r_{d} - r_{d - 1} \ge r_{d - 1} - r_{d - 2} \ge \cdots \ge r_{s + 1} - r_{s}$, it can be deduced that $u_{--}$ has a factor
        \begin{equation*}
            x_{s - 1}^{r_{s - 1}} x_{s}^{\overline{r}_{s - 1}} \prod_{i' > s, V_{i'}^{+} \not\in E} x_{i'}^{r_{i'} - r_{i' - 1}}
        \end{equation*}
        with a total degree not less than $r_{s - 1} + \overline{r}_{s - 1} + \sum_{l = s + 1}^t (r_l - r_{l - 1}) = r_{t} + 1$. Therefore, for any integer $n$ such that $0 \le n \le r_{t}$, it holds that
        \begin{equation*}
            \nabla^n u_{--} \big|_{E} = 0 = \nabla^n u_{++} \big|_{E}.
        \end{equation*}
        
        \item[-] The vertices of $E$ are $V_{i_{t}}^{+}, \cdots, V_{i_{d}}^{+}$. Note here $x_{s - 1}$, $x_{s}$ and $\big(1 - \sum_{i = s + 1}^{d} x_{i}\big)$ vanish on $E$, and it follows from $r_{s - 1} + \overline{r}_{s - 1} + (r_{d} - r_{s}) = r_{d} + 1 > r_{t}$ that for any integer $n$ such that $0 \le n \le r_{t}$, it holds that
        \begin{equation*}
            \nabla^n u_{--} \big|_{E} = 0 = \nabla^n u_{++} \big|_{E}.
        \end{equation*}
    \end{itemize}
    As a consequence, we have $u \in \bm{S}^{\bm{r}}(K_{++} \cup K_{--})$.

    \begin{figure}[htp]
        \centering
        \includegraphics{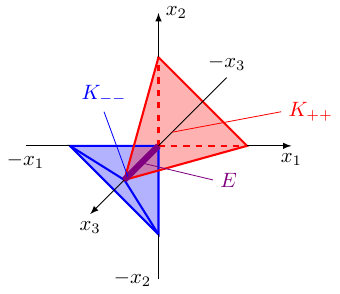}
        \qquad
        \includegraphics{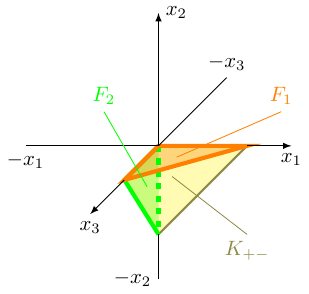}
        \caption{\revise{Three-dimensional illustrations of $\mathcal{T}'$ and $K_{+-}$.}}
    \end{figure}
    
    \revise{
    Since $\revise{\mathsf{S}^{\bm{r}}}$ is extendable, 
    } 
    the function $u$ can be extended to a function in $\bm{S}^{\bm{r}}(\Tq)$, still denoted by $u$ for convenience. We focus on the value of $u$ in $K_{+-} = K_{2^{s - 1}}$, namely, $u_{+-} := u|_{K_{+-}}$, and express it as
    \begin{equation}
        u_{+-} = \revise{\sum_{\sigma \in \mathbb{N}_{0}^{d}} c_{\sigma} \nor{x}^{\sigma}},
    \end{equation}
    where $c_{\sigma} \in \mathbb{R}$ are coefficients determined later, and $\sigma = (\sigma_{1}, \cdots, \sigma_{d}) \in \mathbb{N}_{0}^{d}$ goes through all multi-indices. Consider the multi-index
    \begin{equation*}
        \tau_{s}(p, q, \rho) = (\underbrace{0, \cdots, 0}_{(s - 2) \text{ zeros}}, p, q, \rho_{s + 1}, \cdots, \rho_{d}).
    \end{equation*}
    \revise{
    Here $\rho := (\rho_{s + 1}, \cdots, \rho_{d}) \in \mathbb{N}_{0}^{d - s}$. Let the multi-index $\overline{\rho}_{s} := (r_{s + 1} - r_{s}, \cdots, r_{d} - r_{d - 1}) \in \mathbb{N}_{0}^{d - s}$, then it holds that
    }
    \begin{equation*}
        \revise{\tau_{s}(r_{s - 1}, \overline{r}_{s - 1}, \overline{\rho}_{s}) = \omega_{s}.}
    \end{equation*}
   
    Consider the common subsimplex of $K_{++}$ and $K_{+-}$, denoted as 
    \begin{equation}
        F_{1} := \{(\underbrace{0, \cdots, 0}_{(s - 2) \text{ zeros}}, x_{s - 1}, 0, x_{s + 1},\cdots, x_{d}) \in B_{d}: x_{s - 1}, x_{s + 1}, \cdots, x_{d} \ge 0\}. 
    \end{equation}
    Note that $F_{1}$ is the convex hull of $V_{s - 1}^{+}$, $V_{s + 1}^{+}, \cdots, V_{d}^{+}$ and $V_{0}$. By the continuity assumption, it implies that
    \begin{equation*}
        \frac{\partial^{\overline{r}_{s - 1}}}{\partial x_{s}^{\overline{r}_{s - 1}}} u_{+-} \bigg|_{F_{1}} = \frac{\partial^{\overline{r}_{s - 1}}}{\partial x_{s}^{\overline{r}_{s - 1}}} u_{++} \bigg|_{F_{1}} = 0.
    \end{equation*}
    It follows a direct calculation that
    \begin{equation*}
        \frac{\partial^{\overline{r}_{s - 1}}}{\partial x_{s}^{\overline{r}_{s - 1}}} u_{+-} \bigg|_{F_{1}} = \revise{\sum_{p \in \mathbb{N}_{0}, \rho \in \mathbb{N}_{0}^{d - s}} c_{\tau_{s}(p, \overline{r}_{s - 1}, \rho)} \nor{x}^{\tau_{s}(p, 0, \rho)}} \big|_{F_{1}}.
    \end{equation*}
    Therefore, 
    \revise{
    for all $p \in \mathbb{N}_{0}, \rho \in \mathbb{N}_{0}^{d - s}$, it holds that the coefficients $c_{\tau_{s}(p, \overline{r}_{s - 1}, \rho)} = 0$. Since $\omega_{s} = \tau_{s}(r_{s - 1}, \overline{r}_{s - 1}, \overline{\rho}_{s})$, it follows that $c_{\omega_{s}} = 0$.
    }

    On the other hand, consider the common subsimplex of $K_{--}$ and $K_{+-}$, namely,
    \begin{equation}
        F_{2} := \{(\underbrace{0, \cdots, 0}_{(s - 2) \text{ zeros}}, 0, - x_{s}, x_{s + 1},\cdots, x_{d}) \in B_{d}: x_{s}, x_{s + 1}, \cdots, x_{d} \ge 0\}. 
    \end{equation}
    Note that $F_{2}$ is the convex hull of $V_{s}^{-}$, $V_{s + 1}^{+}, \cdots, V_{d}^{+}$ and $V_{0}$. By the continuity assumption, it implies that
    \begin{align*}
        \frac{\partial^{r_{s - 1}}}{\partial x_{s - 1}^{r_{s - 1}}} u_{+-} \bigg|_{F_{2}} & = \frac{\partial^{r_{s - 1}}}{\partial x_{s - 1}^{r_{s - 1}}} u_{--} \bigg|_{F_{2}} \\
        & = \revise{\nor{x}^{\tau_{s}(0, \overline{r}_{s - 1}, \overline{\rho}_{s})}} \bigg(1 - \sum_{i = s + 1}^{d} x_{i}\bigg)^{r_{d} - r_{s}}\bigg|_{F_{2}} \\
        & = \revise{\nor{x}^{\tau_{s}(0, \overline{r}_{s - 1}, \overline{\rho}_{s})}} \big|_{F_{2}} + \text{ high order terms.}
    \end{align*}
    By a direct calculation, it holds that
    \begin{equation*}
        \frac{\partial^{r_{s - 1}}}{\partial x_{s - 1}^{r_{s - 1}}} u_{+-} \bigg|_{F_{2}} = \revise{\sum_{q \in \mathbb{N}_{0}, \rho \in \mathbb{N}_{0}^{d - s}} c_{\tau_{s}(r_{s - 1}, q, \rho)} \nor{x}^{\tau_{s}(0, q, \rho)}} \big|_{F_{2}}.
    \end{equation*}
    \revise{
    Now it suffices to compare the coefficient of the term $\nor{x}^{\tau_{s}(0, \overline{r}_{s - 1}, \overline{\rho}_{s})}$. Since it holds that $\omega_{s} = \tau_{s}(r_{s - 1}, \overline{r}_{s - 1}, \overline{\rho}_{s})$, it leads to $c_{\omega_{s}} = 1$, which contradicts to $c_{\omega_{s}} = 0$. 
    } 
    Therefore, the superspline space mapping \revise{$\mathsf{S}^{\bm{r}}$} is not extendable if $r_{s} < 2r_{s - 1}$.

    Finally, we conclude the result by an induction argument on $s$.
    
    Again, for the extendability of the superspline space mapping \revise{$\mathsf{S}^{\bm{r}}_{k}$}, a similar contradiction argument is used. Suppose for each $t = d, d - 1, \dots, s + 1$, $s \ge 2$, it holds that $r_{t} \ge 2r_{t - 1}$, but $r_{s} < 2r_{s - 1}$ and the superspline space mapping \revise{$\mathsf{S}^{\bm{r}}_{k}$} is extendable. From \Cref{prop:k-rd}, it holds that $k \ge 2r_{d} + 1$. Note that $r_{s - 1} + \overline{r}_{s - 1} + \sum_{i = s + 1}^{d} (r_{i} - r_{i - 1}) + (r_{d} - r_{s}) = 2r_{d} - r_{s} + 1 \le 2r_{d} + 1$, which implies the constructed $u \in \bm{S}^{\bm{r}}_{k}(K_{++} \cup K_{--})$. Note that $u$ cannot be extended to $\bm{S}^{\bm{r}}(\Tq)$ and $\bm{S}^{\bm{r}}_{k}(\Tq) \subseteq \bm{S}^{\bm{r}}(\Tq)$, which leads to a contradiction. Therefore, it can be asserted that the superspline space mapping \revise{$\mathsf{S}^{\bm{r}}_{k}$} is extendable only if \eqref{eq:condition-rs} holds.
\end{proof}

\revise{
\begin{Remark}
    \label{rmk:restriction-rs}
    Similar to \Cref{rmk:restriction}, the contradictions in the proofs of \Cref{prop:continuity-rd} and \Cref{prop:continuity-rs} comes from the function values on a $(d - s + 2)$-dimensional plane spanned by  $x_{s - 1}, x_{s}, \cdots, x_{d}$.
\end{Remark}
}

\revise{
\subsection{Macro superspline spaces in two dimensions}

\label{subsec:Macro}

In this subsection, we show that the superspline mapping and its extendability can be generalized to the analysis of macro elements, indicating some necessary conditions on smoothness when constructing $C^r$ macro elements. 
Let $\mathcal{T}$ be a triangulation. For each simplex $K$ of $\mathcal{T}$ we apply a specific refinement procedure $a$ to generate a macro element $K_{a}$. The resulting refined triangulation is denoted as $\mathcal{T}_a$.
The superspline space on $\mathcal{T}$ (not $\mathcal{T}_{a}$!) can then be defined as: $\bm{S}^{\bm{r}}_a(\mathcal{T}) := \bm{S}^{\bm r}(\mathcal{T}_a)$ and $ \bm{S}^{\bm{r}}_{a,k}(\mathcal{T}) := \bm{S}^{\bm r}_k(\mathcal{T}_a)$.

Now we turn to the discussion of the extendability of the macro superspline mapping $\mathsf S_{a,k}^{\bm r} :\mathcal{T} \mapsto \bm S_{a,k}^{\bm r}(\mathcal{T})$. This provides several essential insights into the necessary conditions for constructing finite elements based on macro superspline spaces (referred to as macro element spaces in \cite{lai2007spline}).
However, deriving these necessary conditions for each kind of macro elements in general proves to be highly non-trivial.

Here, we focus on several planar macro elements. 
Using computer algebra system Macaulay2 \cite{M2}, we obtain the following results:

\begin{enumerate}
    \item[-] For Alfeld split,  $\mathsf S_{A,k}^{(2,3)}$ is extendable for $k \ge 7$, while $\mathsf S_{A,k}^{(2,2)}$ is not extendable for $k\geq 4$, and  $\mathsf S_{A,k}^{(3,3)}$ is not extendable for $k\geq 5$.
    \item[-] For Morgan-Scott split, $\mathsf S_{MS,k}^{(2,2)}$ is extendable for $k \ge 6$,  $\mathsf S_{MS,k}^{(2,3)}$ is extendable for $k \ge 7$, while $\mathsf S_{MS,k}^{(3,3)}$ is not extendable for $k\geq 5$.
\end{enumerate}
For the Alfeld split, the result matches the smoothness condition illustrated in \cite{lai2007spline}. Additionally, some necessary conditions based on the B-form representation are also discussed \cite{laghchim1994triangular}. 
The general cases for the smoothness condition on macro elements are beyond the scope of this paper.

One of the implications is that the extendability is strongly related to the local configuration. For instance, all the contradictions we derived in this article rely on $\Tq$, the triangulation of the $L^{1}$ unit ball. However, some of the obstructions of the extendability will disappear if some configuration (singularity) does not exist. The macro superspline space mapping represents one of the most extreme cases. In this case, we resolve the singularity through manual refinement. In the literature, there are lots of results about how the local geometry affects the dimensions of the spline spaces and the superspline spaces. Therefore, it seems natural and enlightening to discuss how local geometry affects the algebraic property (extendability in this paper) of spline space mappings and superspline space mappings. This is left as a central part of the future work.
}

\appendix
\section{Details of the finite element spaces and the coincidence with the superspline spaces}
\label{sec:if-part}

Throughout this section, we suppose that the continuity vector $\bm{r} = (r_{1}, \cdots, r_{d})$ and the polynomial degree $k$ satisfy Assumption \eqref{eq:assumption}, and let $r_{0} = 0$ for convenience. This section proves that under Assumption \eqref{eq:assumption}, the global finite element space $\bm{E}^{\bm{r}}_{k}(\mathcal{T})$ constructed in \cite{hu2021construction} is coincident with the superspline space $\bm{S}^{\bm{r}}_{k}(\mathcal{T})$. Therefore, the corresponding finite element space mapping $\mathsf{E}^{\bm{r}}_{k}$ is the superspline space mapping $\mathsf{S}^{\bm{r}}_{k}$.

We first recall the degrees of freedom of the finite element space from \cite{hu2021construction}. 

\begin{Definition}[Bubble function spaces]
    Let $0 \le s \le d$, $F \in \mathcal{T}_{d - s}$ be a $(d - s)$-dimensional subsimplex, and $ 0 \le n \le r_s$. The bubble function space $\mathcal{B}_{F, n, k}$ on $F$, with $\bm{r} = (r_{1}, \cdots, r_{d})$ and  $k$, is defined as
    \begin{equation*}
        \mathcal{B}_{F, n, k} := \Span \bigg\{\prod_{i = s}^d \lambda_{F, i}^{\sigma_{i}}: (\sigma_{s}, \cdots, \sigma_{d}) \text{ satisfies \eqref{eq:condi-sigma-1} and \eqref{eq:condi-sigma-2}} \bigg\}.
    \end{equation*}
    Here $\lambda_{F, s}, \cdots, \lambda_{F, d}$ are barycentric coordinates of $F$, and the multi-index $(\sigma_{s}, \cdots, \sigma_{d}) \in \mathbb{N}_{0}^{d - s + 1}$ satisfies
    \begin{equation}
        \label{eq:condi-sigma-1}
        \sum_{i = s}^{d} \sigma_{i} = k - n,
    \end{equation}
    and
    \begin{equation}
        \label{eq:condi-sigma-2}
        \begin{gathered}
            \sigma_{i_{1}} + \cdots + \sigma_{i_{l}} > r_{l + s} - n, \\
            \forall \{i_{1}, \cdots, i_{l}\} \subsetneq \{s, \cdots, d\}, \quad 1 \le l \le d - s.
        \end{gathered}
    \end{equation}
    For the special case $s = d$, i.e., $F$ is a vertex $V$, we set $\mathcal{B}_{V, n, k} := \mathbb{R}$.
\end{Definition}

The dependence on $\bm r$ and $k$ will be omitted if the context is clear. Based on the bubble function space $\mathcal{B}_{F, n, k}$, we can define the degrees of freedom on $F$.
 
\begin{Definition}[Degrees of Freedom]
    \label{defi:dof}
    Let $0 \le s \le d$, and $F \in \mathcal{T}_{d - s}$ be a $(d - s)$-dimensional subsimplex. The degrees of freedom on $F$ are defined as follows. For each $0 \le n \le r_{s}$ and $\theta \in \{(\theta_{1}, \cdots, \theta_{s}) \in \mathbb{N}_{0}^{s}: |\theta| := \sum_{i = 1}^{s} \theta_{i} = n\}$, define the functional space for $u$ by weighted moments as
    \begin{equation*}
        \label{eq:dof-FEM}
        \frac{1}{\abs{F}} \int_{F}(D_{F}^{\theta} u) \cdot v, \quad \forall v \in \mathcal{B}_{F, n, k}.
    \end{equation*}
    Here $D_{F}^{\theta} := \frac{\partial^{n}}{\prod_{i = 1}^{s}\partial \bm{n}_{F, i}^{\theta_{i}}} $ represents an $n$-th order normal derivative of $u$ on $F$, where $\bm{n}_{F, 1}, \cdots, \bm{n}_{F, s}$ are $s$ pairwise orthogonal unit normal vector(s) of $F$.
\end{Definition}

The following technical lemma provides some connection between bubble function spaces.
\begin{Lemma}
    \label{lem:bubble-coin}
    Let $\mathcal{B}_{E, n'' + n, k}$ is defined with respect to  $\bm{r} = (r_{1}, \cdots, r_{d})$ and $k$, and $\widetilde{\mathcal{B}}_{E, n'', k - n}$ is defined with respect to $\bm{q} = (r_{s + 1} - n, \cdots, r_{d} - n)$ and $(k - n)$. It holds that  $
        \mathcal{B}_{E, n'' + n, k} = \widetilde{\mathcal{B}}_{E, n'', k - n}.$
\end{Lemma}

\begin{proof}
    Let $\lambda_{E, t}, \cdots, \lambda_{E, d}$ be barycentric coordinates of $E$. 
Note that $\widetilde{\mathcal{B}}_{E, n'', k - n}$ 
    \begin{equation*}
        \widetilde{\mathcal{B}}_{E, n'', k - n} = \Span \left\{\prod_{i = t}^d \lambda_{E, i}^{\sigma_{i}}: (\sigma_{t}, \cdots, \sigma_{d}) \text{ satisfies \eqref{eq:condi-sigma-1-2} and \eqref{eq:condi-sigma-2-2}} \right\}.
    \end{equation*}
    Here, $(\sigma_{t}, \cdots, \sigma_{d}) \in \mathbb{N}_{0}^{d - t + 1}$ satisfies
    \begin{equation}
        \label{eq:condi-sigma-1-2}
        \sum_{i = t}^d \sigma_{i} = (k - n) - n'' = k - (n'' + n),
    \end{equation}
    and
    \begin{equation}
        \label{eq:condi-sigma-2-2}
        \begin{gathered}
            \sigma_{i_{1}} + \cdots + \sigma_{i_{m}} > q_{l + t - s} - n'' = r_{m + t} - (n'' + n), \\
            \forall \{i_{1}, \cdots, i_{m}\} \subsetneq \{t, \cdots, d\}, \quad 1 \le m \le d - t.
        \end{gathered}
    \end{equation}
    Similarly, 
    \begin{equation*}
        \mathcal{B}_{E, n'' + n, k} = \Span \left\{\prod_{i = t}^d \lambda_{E, i}^{\sigma_{i}}: (\sigma_{t}, \cdots, \sigma_{d}) \text{ satisfies \eqref{eq:condi-sigma-1-1} and \eqref{eq:condi-sigma-2-1}} \right\},
    \end{equation*}
    where, $(\sigma_{t}, \cdots, \sigma_{d}) \in \mathbb{N}_{0}^{d - t + 1}$ satisfies
    \begin{equation}
        \label{eq:condi-sigma-1-1}
        \sum_{i = t}^d \sigma_{i} = k - (n'' + n),
    \end{equation}
    and
    \begin{equation}
        \label{eq:condi-sigma-2-1}
        \begin{gathered}
            \sigma_{i_{1}} + \cdots + \sigma_{i_{l}} > r_{l + t} - (n'' + n), \\
            \forall \{i_{1}, \cdots, i_{l}\} \subsetneq \{t, \cdots, d\}, \quad 1 \le l \le d - t.
        \end{gathered}
    \end{equation}
    Comparing \eqref{eq:condi-sigma-1-2} and \eqref{eq:condi-sigma-1-1}, \eqref{eq:condi-sigma-2-2} and \eqref{eq:condi-sigma-2-1}, one can get the conclusion.
\end{proof}

By the definition of the finite element spaces, a finite element space over a triangulation $\mathcal{T}$ is defined as
\begin{equation}
    \label{eq:FE-space}
    \begin{aligned}
        \bm{E}^{\bm{r}}_{k}(\mathcal{T}) & := \{u\in L^{2}(\Omega) : \text{for each } d \text{-simplex } K \in \mathcal{T}, u|_{K} \in \mathcal{P}_{k}(K); \\
        & \qquad \text{for each } 1 \le s \le d, F \in \mathcal{T}_{d - s}, 0 \le n \le r_{s}, |\theta| = n,  v \in \mathcal{B}_{F, n, k} \\
        & \qquad \frac{1}{|F|} \int_{F} \big(D_{F}^{\theta} u|_{K}\big) \cdot v \text{ is single-valued for any } K \in \Star(F; \mathcal{T})\}.
    \end{aligned}
\end{equation}
Clearly, $\bm S^{\bm r}_k(\mathcal T)$ is a subspace of $\bm E^{\bm r}_{k}(\mathcal T)$. 

We now prove that the two spaces are the same. The result is based on the following proposition.

\begin{Proposition}
    \label{prop:Adj-continuity}
    For each $s$ such that $1 \le s \le d$, let $F$ be a $(d - s)$-dimensional subsimplex, shared by two adjacent $d$-dimensional simplices $K_{+}$ and $K_{-}$. Suppose $u_{+} \in \mathcal{P}_{k}(K_{+})$ and $u_{-} \in \mathcal{P}_{k}(K_{-})$, satisfying that $l(u_{+}) = l(u_{-})$ holds for each degree of freedom $l$ defined on $F$ and its subsimplices. Then for any integer $n$ such that $0 \le n \le r_{s}$, it holds that
    \begin{equation*}
        \nabla^{n} u_{+} \big|_{F} = \nabla^{n} u_{-} \big|_{F}.
    \end{equation*}
\end{Proposition}

\begin{proof}
    Clearly, it suffices to check the cases for the normal derivatives. Since for any tangential derivative  $D_{/\!/}$ of $F$, $u_{+}|_{F} = u_{-}|_{F}$ implies $D_{/\!/} u_{+}|_{F} = D_{/\!/} u_{-}|_{F}$. Therefore, it is sufficient to prove that for each $\theta \in \mathbb{N}_{0}^{s}$ with $|\theta| = n \le r_{s}$, it holds that
    \begin{equation*}
        D_{F}^{\theta} u_{+} \big|_{F} = D_{F}^{\theta} u_{-} \big|_{F}.
    \end{equation*}

    For any $t$ such that $s \le t \le d$, let $E$ be a $(d - t)$-dimensional subsimplex of $F$. For any integer $n'$ such that $0 \le n' \le r_{t}$ and any multi-index $\theta' \in \mathbb{N}_{0}^{t}, |\theta'| = n'$, it holds that
    \begin{equation}
    \label{eq:De}
        \frac{1}{\lvert E \rvert}\int_{E} \big(D_{E}^{\theta'} u_{+}\big) \cdot v = \frac{1}{\lvert E \rvert}\int_{E} \big(D_{E}^{\theta'} u_{-}\big) \cdot v, \quad \forall v \in \mathcal{B}_{E, n', k}.
    \end{equation}
    Here $D_{E}^{\theta'} u := \frac{\partial^{n'}}{\prod_{i = 1}^{t} \partial \bm{n}_{E, i}^{\theta'_{i}}} u$, where $\bm{n}_{E, 1}, \cdots, \bm{n}_{E, t}$ are $t$ pairwise orthogonal unit normal vectors of $E$. 

    Since $E$ is a subsimplex of $F$ and $\bm{n}_{F, 1}, \cdots, \bm{n}_{F, s}$ are orthogonal unit normal vectors of $F$, these vectors are also the normal vectors of $E$. Hence, $\bm{n}_{F, 1}, \cdots, \bm{n}_{F, s}$ can be linearly represented by $\bm{n}_{E, 1}, \cdots, \bm{n}_{E, t}$. 
    Therefore, given $n$ and $\theta \in \mathbb{N}_{0}^{t}, |\theta| =  n'' \le r_{t} - n$, it follows from \eqref{eq:De} that 
    \begin{equation}
        \label{eq:dof-deri}
        \frac{1}{\lvert E \rvert}\int_{E} \big[D_{E}^{\theta''} \big(D_{F}^{\theta} u_{+} \big)\big] \cdot v = \frac{1}{\lvert E \rvert}\int_{E} \big[D_{E}^{\theta''} \big(D_{F}^{\theta} u_{-} \big)\big] \cdot v, \quad \forall v \in \mathcal{B}_{E, n'' + n, k}.
    \end{equation}

    We shall prove that \eqref{eq:dof-deri} implies
    \begin{equation*}
        D_{F}^{\theta} u_{+}\big|_{F} = D_{F}^{\theta} u_{-}\big|_{F},
    \end{equation*}
    which follows from the fact that $D_{F}^{\theta} u_{+}\big|_{F}$ and $D_{F}^{\theta} u_{-}\big|_{F}$ conform to the degrees of freedom \Cref{defi:dof} of $\bm E_{k-n}^{\bm q}(F)$ with $\bm{q} = (r_{s + 1} - n, \cdots, r_{d} - n)$.

    Following \Cref{lem:bubble-coin}, we use $\mathcal{B}$ and $\widetilde{\mathcal{B}}$ to distinguish the two sets of bubble function spaces, where $\mathcal{B}$ is for those spaces with respect to $\bm{r}$ and $k$, and $\widetilde{\mathcal{B}}$ for these spaces with respect to $\bm{q}$ and $(k - n)$. The degrees of freedom of $\bm{E}_{k - n}^{\bm q}(F)$ are
    \begin{equation}
        \label{eq:dof-subsimplex}
       w \mapsto \frac{1}{\lvert E \rvert}\int_{E} \big(D_{E}^{\theta''} w\big) \cdot v, \quad \forall v \in \widetilde{\mathcal{B}}_{E, n'', k - n}. 
    \end{equation}
    It follows from \Cref{lem:bubble-coin} that $\mathcal{B}_{E, n'' + n, k} = \widetilde{\mathcal{B}}_{E, n'', k - n}$.
 
    As a consequence, a combination of the equation \eqref{eq:dof-deri} and the degrees of freedom \eqref{eq:dof-subsimplex} leads to $D_{F}^{\theta} u_{+}\big|_{F} = D_{F}^{\theta} u_{-}\big|_{F}$.
\end{proof}

\begin{Proposition}
    \label{coro:E=S}
    Suppose $\bm{r}$ and $k$ satisfy \eqref{eq:assumption}. For any triangulation $\mathcal{T}$, it holds that $\bm{E}^{\bm{r}}_{k}(\mathcal{T}) = \bm{S}^{\bm{r}}_{k}(\mathcal{T})$. That is,  $\mathsf{E}^{\bm{r}}_{k} = \mathsf{S}^{\bm{r}}_{k}$.
\end{Proposition}
\begin{proof}
    By \Cref{prop:Adj-continuity}, $\bm{E}^{\bm{r}}_{k}(\mathcal{T})$ is a subspace of $\bm{S}^{\bm{r}}_{k}(\mathcal{T})$. The other side of inclusion is indicated by the definition of $\bm E_{k}^{\bm r}(\mathcal T)$. Therefore, we conclude the result. 
\end{proof}
\bibliographystyle{plain}
\bibliography{references}

\end{document}